\documentclass[graybox]{svmult}

\usepackage{type1cm}        
\usepackage{makeidx}         
\usepackage{graphicx}        
\usepackage{multicol}        
\usepackage[bottom]{footmisc}
\usepackage{newtxtext}       %
\usepackage[varvw]{newtxmath}       

\usepackage{xcolor}

\newcommand{\HFAsp}{{\boldsymbol A}}     
\newcommand{\HFAT}{{\HFAsp(HFTst)}}     
\newcommand{\HFTst}{\mathbb{T}}     
\newcommand{\HFATN}{\big( \HFAT, \, \|\HBebbes\|_\HFAsp \big)}     
\newcommand{\HBebbes}{\mbox{$\,\cdot\,$}}     
\newcommand\HFBFsp{{ \HFBsp'}}     
\newcommand{\HFBsp}{{\boldsymbol B}}     
\newcommand{\HFBspN}{(\HFBsp, \, \|\HBebbes\|_\HFBsp)}     
\newcommand{\HFCORd}{{\HFCOsp(\HFRst^d)}}     
\newcommand{\HFCOsp}{{\HFCsp_{\negthinspace 0}}}     
\newcommand{\HFRst}{{\mathbb R}}     
\newcommand{\HFCORdN}{{\big( \HFCOsp(\HFRst^d), \, \|\HBebbes\|_\infty \big)}}     
\newcommand{\HFCsp}{{\boldsymbol C}}     
\newcommand{\HFCbRd}{{\HFCbsp(\HFRdst)}}     
\newcommand{\HFCbsp}{{\HFCsp_{\negthinspace b}}}     
\newcommand{\HFRdst}{{{\HFRst^d}}}     
\newcommand{\HFCcRd}{{\HFCcsp(\HFRst^d)}}     
\newcommand{\HFCcsp}{{\HFCsp_{\negthinspace c}}}     
\newcommand{\HFCooY}{{\HFCoosp(\HFYsp)}}     
\newcommand{\HFCoosp}{{\boldsymbol{\mathcal C \negthinspace o}}}     
\newcommand{\HFYsp}{{\boldsymbol Y}}     
\newcommand\HFCspN{(\HFCsp, \, \|\HBebbes\|_\HFCsp)}     
\newcommand{\HFFAMglaLa}{{(g_\lambda)_{\HFlainLa}}}     
\newcommand{\HFlainLa}{{\lambda{\in}\Lambda}}     
\newcommand{\HFFF}{{\mathcal{F}}}     
\newcommand\HFFLi{{\mathcal F}{\negthinspace \Lisp}}     
\newcommand{\Lisp}{{\HFLsp^1}}     
\newcommand{\HFFLiRd}{{ \HFFLi(\HFRdst) }}     
\newcommand\HFFLiRdN{\big( \HFFLiRd, \, \|\HBebbes\|_{\HFFLisp} \big)}     
\newcommand{\HFFLisp}{{ {\mathcal F} \negthinspace \Lisp}}     
\newcommand\HFFLinf{{\mathcal F} \HFnnth \HFLinsp}     
\newcommand{\HFnnth}{{ \negthinspace \: \negthinspace }}     
\newcommand{\HFLinsp}{{\HFLsp^\infty}}     
\newcommand{\HFFLinsp}{{\mathcal F}\HFnth \HFLinsp}     
\newcommand\HFnth{\negthinspace}     
\newcommand\HFFLpsp{{\HFFF \negthinspace \HFLpsp}}     
\newcommand{\HFLpsp}{{\HFLsp^p}}     
\newcommand{\HFFT}{{\operatorname{{\mathcal F}}}}     
\newcommand{\HFHRdsp}{ {\mathcal H}_\HFRdst}     
\newcommand\HFHilb{\mathcal H}     
\newcommand\HFHs{{{\cal H}_s}}     
\newcommand\HFHsRd{{{\mathcal H}_s (\HFRdst)}}     
\newcommand\HFHsRdN{{({\cal H}_s (\HFRdst), \, \|\HBebbes\|_{\HFHs} \big)}}     
\newcommand{\HFIFT}{\operatorname{\mathcal F}^{-1}}     
\newcommand{\HFLiRd}{{\Lisp \HFnth (\HFRst^d)}}     
\newcommand{\HFLiRdN}{\big( \HFLiRd, \, \|\HBebbes\|_1 \big)}     
\newcommand\HFLiTFd{ \Lisp(\HFTFd)}     
\newcommand{\HFTFd}{{{ \HFRdst \times \HFRdsth }}}     
\newcommand\HFLinf{{\HFLsp^\infty}}     
\newcommand{\HFLsp}{{\boldsymbol L}}     
\def\Lino#1{{ \| #1 \|_\Lisp}}     
\newcommand{\HFLpR}{{\HFLpsp(\HFRst)}}     
\newcommand{\HFLpRd}{{\HFLpsp(\HFRst^d)}}     
\newcommand{\HFLpRdN}{\big( \HFLpRd, \, \|\HBebbes\|_p \big)}     
\newcommand{\HFLpwRd}{{\HFLpwsp(\HFRst^d)}}     
\newcommand{\HFLpwsp}{{\HFLsp^p_{\negthinspace w}}}     
\newcommand{\HFLqRd}{{ \HFLsp^q(\HFRdst) }}     
\newcommand{\HFLqRdN}{\big( \HFLqRd, \, \|\HBebbes\|_q \big)}     
\newcommand{\HFLtR}{{\HFLtsp(\HFRst)}}     
\newcommand{\HFLtsp}{{\HFLsp^2}}     
\newcommand{\HFLtRN}{\big( \HFLtR, \, \|\HBebbes\|_2 \big)}     
\newcommand{\HFLtRd}{{\HFLtsp(\HFRst^d)}}     
\newcommand{\HFLtRdN}{\big( \HFLtRd, \, \|\HBebbes\|_2 \big)}     
\newcommand{\HFLtRtd}{{\HFLtsp(\HFRst^{2d})}}     
\newcommand{\HFLtcG}{{\HFLtsp(\HFcG)}}     
\newcommand{\HFcG}{\mathscr{G}}     
\newcommand{\HFLtcGN}{\big( \HFLtcG, \, \|\HBebbes\|_2 \big)}     
\newcommand\HFLtvsRd{{\HFLsp^2_{v_s}(\HFRdst)}}     
\newcommand{\HFLtwRd}{{\HFLtwsp(\HFRst^d)}}     
\newcommand{\HFLtwsp}{{\HFLsp^2_{\negthinspace w}}}     
\newcommand\HFLtwRdN{\big( \HFLtwRd, \, \|\HBebbes\|_{2,w} \big)}     
\newcommand{\HFMbRd}{{\HFMbsp(\HFRst^d)}}     
\newcommand{\HFMbsp}{{\HFMsp_{\negthinspace b}}}     
\newcommand\HFMbRdN{{(\HFMbsp(\HFRst^d), \| \HBebbes \|_\HFMbsp )}}     
\newcommand{\HFMsp}{{\boldsymbol M}}     
\newcommand\HFMinf{{\HFMsp^\infty}}     
\newcommand{\HFMisp}{{ \HFMsp^1 }}     
\newcommand{\HFMpRd}{{\HFMpsp(\HFRst^d)}}     
\newcommand\HFMpsp{{\HFMsp^p}}     
\newcommand{\HFMppRd}{{ \HFMsp^{p,p}(\HFRdst)}}     
\newcommand{\HFMqRd}{{\HFMqsp(\HFRst^d)}}     
\newcommand{\HFMqsp}{{\HFMsp^q}}     
\newcommand{\HFNst}{{\mathbb N}}     
\newcommand{\HFPgLam}{{\HFPfr_{g,\Lambda}}}
\newcommand\HFPsifam{{ \Psi = (\psi_i)_{i \in I} }}     
\newcommand{\HFRaRd}{{\HFRasp(\HFRst^d)}}     
\newcommand{\HFRasp}{{\boldsymbol{\mathcal R}}}     
\newcommand{\HFRdsth}{{\widehat{\HFRst}^d}}     
\newcommand{\HFRtdst}{{\HFRst^{2d}}}     
\newcommand{\HFSINC}{\operatorname{SINC}}     
\newcommand{\HFSOG}{{\HFSOsp(G)}}     
\newcommand{\HFSOsp}{{\HFSsp_{\negthinspace 0}}}     
\newcommand{\HFSOGN}{\big( \HFSOG, \|\HBebbes\|_\HFSOsp \big)}     
\newcommand{\HFSOGTrRd}{{ (\HFSOsp,\HFLtsp,\HFSOPsp)(\HFRdst) }}     
\newcommand{\HFSOPsp}{{\HFSsp_{\negthinspace 0}'}}     
\newcommand{\HFSOPRd}{{\HFSOPsp(\HFRst^d)}}     
\newcommand{\HFSOPRdN}{(\HFSOPRd , \| \HBebbes \|_{\HFSOPsp} ) }     
\newcommand{\HFSOPRtd}{{\HFSOPsp(\HFRst^{2d})}}     
\newcommand{\HFSOPnorm}[1]{{\lVert #1 \rVert_\HFSOPsp}}     
\newcommand\HFSOPsi{{\HFSOPnorm{\sigma}}}     
\newcommand{\HFSsp}{{\boldsymbol S}}     
\newcommand{\HFSORd}{{\HFSOsp(\HFRst^d)}}     
\newcommand{\HFSORdN}{\big( \HFSORd, \|\HBebbes\|_\HFSOsp \big)}     
\newcommand{\HFSORtd}{{\HFSOsp(\HFRst^{2d})}}     
\newcommand{\HFSOnorm}[1]{{\lVert #1 \rVert_\HFSOsp}}     
\newcommand{\HFScPRd}{{\HFScPsp(\HFRst^d)}}     
\newcommand{\HFScPsp}{{\HFScsp'}}     
\newcommand{\HFScsp}{{\boldsymbol{\mathcal S}}}     
\newcommand{\HFScRd}{{\HFScsp(\HFRst^d)}}     
\newcommand{\HFVgf}{{V_g f}}     
\newcommand\HFVgg{{V_gg}}     
\newcommand{\HFWBC}{{\HFWsp(\HFBsp,\HFCsp)}}     
\newcommand{\HFWsp}{{\boldsymbol W}}     
\newcommand{\HFWBCN}{\big(\HFWBC,\,\|\HBebbes\|_\HFWBC \big)}     
\newcommand\HFWBPCP{{ \HFWsp(\HFBFsp,\HFCsp')}}     
\newcommand{\HFWCOli}{ \WTsp \HFCOsp \HFlisp}     
\def\WTsp#1#2{{\HFWsp(#1,#2)}}     
\newcommand{\HFlisp}{{\HFlsp^1}}     
\newcommand{\HFWCOliG}{ \WTsp \HFCOsp \HFlisp (G) }     
\newcommand{\HFWCOliRd}{\WTsp \HFCOsp \HFlisp (\HFRdst) }     
\newcommand{\HFWCOliRdN}{{\big( \HFWCOliRd, \, \|\HBebbes\|_\HFWsp \big)}}     
\newcommand{\HFWCOlisp}{ \WTsp \HFCOsp \HFlisp}     
\newcommand{\HFWCOlp}{{\WTsp \HFCOsp \HFlpsp }}     
\newcommand{\HFlpsp}{{\HFlsp^p}}     
\newcommand\HFWCOlpRd{{\HFWsp(\HFCOsp,\HFlpsp)(\HFRdst)}}     
\newcommand{\HFWCOlt}{{\WTsp \HFCOsp \HFltsp }}     
\newcommand{\HFltsp}{{\HFlsp^2}}     
\newcommand{\HFWCOltRd}{\WTsp \HFCOsp \HFltsp (\HFRdst) }     
\newcommand\HFWFLili{{\HFWsp(\HFFLi \HFnnth,\HFlisp)}}     
\newcommand\HFWFLilin{{\HFWsp(\HFFLi,\HFlinsp)}}     
\newcommand{\HFlinsp}{{\HFlsp^\infty}}     
\newcommand\HFWFLilt{\HFWsp(\HFFLi,\HFltsp)}     
\newcommand\HFWFLinlin{ \HFWsp( \HFFLinsp, \HFlinsp)}     
\newcommand\HFWFLinlinRd{{\HFWsp(\HFFLinf,\HFlinsp)(\HFRdst)}}     
\newcommand\HFWFLplp{{\HFWsp(\HFFT\HFLsp^p,\HFlpsp)}}     
\newcommand{\HFWFLplq}{{\HFWsp(\HFFT\HFLsp^p,\HFlqsp)}}     
\newcommand\HFlqsp{{ \HFlsp^q}}     
\newcommand\HFWFlinlin{ \HFWsp( \HFFLinsp, \ell^\infty)}
\newcommand\HFWLilin{{\HFWsp(\Lisp,\HFlinsp)}}     
\newcommand\HFWLilinf{{\HFWsp(\Lisp,\HFlinsp)}}     
\newcommand\HFWLilp{{\HFWsp(\Lisp,\HFlpsp)}}     
\newcommand\HFWLinfli{{ \HFWsp(\HFLinsp,\HFlisp) }}     
\newcommand\HFWLpli{{ \HFWsp(\HFLpsp,\ell^1) }}     
\newcommand\HFWLplq{{ \HFWsp(\HFLpsp,\HFlqsp) }} 
\newcommand\HFWLplqRd{{ \HFWsp(\HFLpsp,\HFlqsp)(\HFRdst) }}     
\newcommand\HFWLtli{\HFWsp(\HFLtsp,\HFlisp)}     
\newcommand\HFWLtliRd{{\HFWsp(\HFLtsp,\HFlisp)(\HFRdst)}}     
\newcommand\HFWLtlin{\HFWsp(\HFLtsp,\HFlinsp)}     
\newcommand\HFWLtlt{{\HFWsp(\HFLtsp,\HFltsp)}}     
\newcommand\HFWMlin{ \HFWsp(\HFMsp,\HFlinsp)}     
\newcommand\HFWMlinf{ \HFWsp(\HFMsp,\HFlinsp)}     
\newcommand\HFWMlinfRd{ \HFWsp(\HFMsp,\HFlinsp)(\HFRdst)}     
\def\WTsp#1#2{{\HFWsp(#1,#2)}}     
\newcommand{\HFYspN}{(\HFYsp, \, \|\HBebbes\|_\HFYsp)}     
\newcommand{\HFZdst}{{\HFZst^d}}     
\newcommand{\HFZst}{{\mathbb Z}}     
\newcommand{\HFZtdst}{{\HFZst^{2d}}}     
\newcommand\HFboxcar{{{\bf 1}_{[-1/2,1/2]}}}     
\newcommand{\HFcheckm}{{^\checkmark}}     
\newcommand\HFclam{{c_\lambda}}     
\newcommand{\HFellGTr}{{ (\HFlisp,\HFltsp \negthinspace , \HFlinsp) }}     
\newcommand\HFfchk{{f^\checkmark}}     
\newcommand{\HFgd}{{\widetilde{g}}} 
\newcommand\HFhatf{{\widehat{f}}}     
\newcommand{\HFhatg}{{\widehat{g}}}     
\newcommand\HFhatsi{{\widehat{\sigma}}}     
\newcommand\HFhkr{\hookrightarrow}     
\newcommand{\HFintRd}{\int_{\HFRst^d}}     
\newcommand\HFjapy{{\langle y \rangle}}     
\newcommand{\HFkiZd}{{{k \in \HFZdst}}}
\newcommand\HFliLam{{\HFlisp(\Lambda)}}     
\newcommand{\HFliZ}{{\HFlisp(\HFZst)}}     
\newcommand{\HFlinfnorm}[1]{{\lVert #1 \rVert_{\infty}}}     
\newcommand{\HFlinorm}[1]{{\lVert #1 \rVert_1}}     
\newcommand{\HFlsp}{{\boldsymbol\ell}}     
\newcommand\HFlqwsp{{\HFlsp^q_w}}     
\newcommand\HFltLam{{ \HFltsp(\Lambda)}}     
\newcommand{\HFltwsp}{{\HFlsp^2_{w}}}     
\newcommand{\HFprojcvh}{{ \widehat{\circledast} }}     
\newcommand\HFqandq{{ \quad \mbox{and} \quad }}     
\newcommand\HFsigSOP{{ \sigma \in \HFSOPsp }}     
\newcommand{\HFspec}{\operatorname{spec}}     
\newcommand{\HFsumkZd}{\sum_{\HFkiZd}}     
\newcommand{\HFsumlaLa}{\sum_{\lambda\in\Lambda}}     
\newcommand{\HFsupnorm}[1]{{\lVert #1 \rVert_\infty}}     
\newcommand{\HFsupp}{\operatorname{supp}}     
\newcommand\HFsuth{{ \, | \, } }     
\newcommand{\HFwphi}{\widetilde{\varphi}}     
\newcommand\HFwstd{{w^* \negthinspace -}}     
\newcommand\HFxRd{{x \in \HFRdst}}     

\def\HFCoosp{{\boldsymbol{C \HFnth o}}}
\def\HFPgLam{{\operatorname{P}_{\HFnth g, \Lambda}}}
\def\HFMbnorm#1{{\|#1\|_\HFMbsp}}
\def\HFnormta#1#2{{  \| {#1}   \|_{#2} \, }}
\def\HFBnorm#1{{ \| #1 \|_\HFBsp }}

\makeindex             

\begin{document}

\def\HFcG{{G}}

 \title*{The Concept of Wiener Amalgam Spaces}

\author{Hans G. Feichtinger orcidID{0000-0002-9927-07423}}
\institute{Hans G. Feichtinger \at Faculty of Mathematics, University of Vienna, Oskar-Morgenstern-Platz 1, 1090 Vienna,\newline  and ARI (OEAW, AUSTRIAN Academy of Sciences);  \email{hans.feichtinger@univie.ac.at} }
%
%
\maketitle

\abstract{
This article concerns Wiener amalgam spaces 
and provides some hints about their usefulness in various branches of Harmonic Analysis. Despite the fact that the underlying construction principles
are quite easy to understand and basic facts follow naturally by simple rules, these spaces have not obtained the same popularity as certain other function spaces which are much more complicated to describe and often just serve a very particular purpose.
\newline  \indent
This situation has motivated the author to provide here a summary of the foundations of the theory of Wiener amalgam spaces (and the motivation behind their construction) and a selection of relevant applications, some 45 years
years after the key paper published in 1983.
\newline \indent
We recall first that the so-called {\it classical Wiener amalgam spaces} using local $\HFLpsp$-norms combined with a global $\HFlqsp$-behaviour are already quite
useful, e.g.\ for an improvement of the Hausdorff-Young Theorem with some interesting consequences for Sobolev algebras.  However,  the main emphasis
will be  based on the idea of allowing more general local components  (describing for example smoothness or membership in the Fourier algebra).
This opened the door to the introduction of {\it modulation spaces}, which are now recognized as standard tools in time-frequency analysis.
\newline \indent
We will demonstrate in this article how Wiener amalgam spaces methods
can be used to prove the Sobolev embedding theorem or determine the
pointwise multipliers of Sobolev algebras. We also demonstrate that
the space of multipliers from the classical Wiener algebra $\HFWCOliRd$
into its dual can be identified with $\HFSOPRd$, the space of mild
distributions.
 }


\section{The Classical Wiener Amalgam Spaces}

One of the key arguments for introducing Wiener amalgam spaces (or Wiener-type spaces as they were called in \cite{fe83} and \cite{fe81-1}) is the observation, that there are no inclusion relations between the Banach spaces
$\HFLpRdN$ with $1\leq p \leq \infty$, because of either local counter-examples
or step functions implying non-inclusion results. One way out is to decompose a measurable function on  $\HFRdst$ into blocks of the form $k + Q$ (where $Q$ is the unit cube, or just some relatively compact fundamental domain for $\HFZdst \lhd \HFRdst$), define a discrete sequence (indexed by $\HFZdst$) of the form
$( \HFnormta{f \cdot {\bf 1}_{k+Q}} {\HFLpsp})$ and require that this sequence
belongs to the sequence space $\HFlqsp(\HFZdst)$. These spaces are well-defined Banach spaces with respect to their naturally associated norms.
We will denote by $\HFWLplqRd$. In fact, one can take any lattice  $\Lambda \lhd \HFRdst$ and any relatively compact set containing a fundamental domain for $\Lambda$ instead of just $\HFZdst$,  and one will get the same space with an  equivalent norm.  The classical reference \cite{fost85} provides
a summary of the results known at that time.  The consequences for the
study of product-convolution operators, but also convolution relations
have been given in \cite{busm81}, for example.
Early variants of the Hausdorff-Young theorem for amalgams are also found in \cite{bedu77}, \cite{bedu79}, \cite{st79}, \cite{sz80} or \cite{fo83}.


One can say that Wiener amalgam spaces allow to capture the global
behaviour (in the sense of $\ell^q$-summability or a weighted version
thereof, as described in \cite{he03}) of a local property, namely the $\HFLpsp$-norm over the cubes.
Obviously there is a lot of room for generalization, e.g., by replacing
the local norm by some another Banach function space norm (in the sense
of Luxemburg-Zaanen, see \cite{besh88}), such as Lorentz space or Orlicz space norms for the local (but also the global) component, without changing much
(for some recent references in this direction see \cite{aroz23}, \cite{arozte24},\cite{liliwa24}).

The terminology {\it Wiener} amalgam spaces is justified because the first appearance of such spaces is in the work of N.~Wiener on Tauberian Theorems (\cite{wi32}, and his book \cite{wi33}). Later on such spaces appeared in the work of F.~Holland (\cite{ho75}) and H.E.~Krogstad (\cite{kr76}). The spaces $\HFWLpli$ and in particular $\HFWCOli$ are so-called {\it Segal algebras} in the
sense of H.~Reiter (\cite{re68},\cite{rest00}).

The observation that $\HFWCOli$ is the smallest
among all the spaces $\HFWLplq$ (in fact, it is the closure of the test
functions in $\HFWLinfli$) was observed in \cite{fe77-3}. This characterization
motivated the construction of $\HFWFLili$ as given in \cite{fe81-2}.
In the time-frequency literature the space $\HFWCOli$ is known as
{\it Wiener's Algebra}, not to be confused with the Wiener Algebra
$\HFATN$ of absolutely convergent Fourier series. Note, however, that
the concept of Wiener amalgam spaces is only meaningful in the case
of non-compact groups, while the concept of absolutely convergent Fourier
series expansions makes sense for compact Abelian groups only. Without
going into details and listing all the nice (functorial) properties of
the Segal algebra $\HFSOsp$, which has been introduced as the (generalized)
Wiener amalgam space $\HFWFLili$, one can say that it is a combination of these
two ideas. Locally the elements of $\HFWFLili$ can be characterized as elements
whose localized pieces have absolutely convergent Fourier series expansions via periodic extension, but this local norm is turned into a global norm as a  global $\HFlisp$-amalgam.
The first appearance of $\HFWFLili$ was in a report by J.P.~Bertrandias.

More generally, one can form the Wiener amalgam space $\HFWFLplq$  or weighted version thereof, and to define function spaces similar to Besov spaces by
taking inverse Fourier transforms of such spaces. The corresponding family
of Banach spaces of distributions have been baptised
{\it modulation spaces}  by this author. The key paper is the original
technical report  \cite{fe83-4} which makes use of the continuous description and was published only 20 years later in \cite{fe03}.


\section{BUPUs and Continuous Norm Descriptions}

The definition of Wiener amalgam spaces, especially in the context of LCA
(locally compact Abelian) groups, beyond the Euclidean case, seems to require
the existence of a discrete, cocompact lattice, which unfortunately may not
exists in this generality. Moreover, even in the Euclidean setting one may
ask (for good reasons) what happens of one replaces the given lattice by another lattice (with a corresponding change of the fundamental domain).
In fact, even, the choice of the (relatively compact) fundamental domain for a given lattice might have an influence. Fortunately it is not much more than
a good exercise to verify that all these minor variations of the approach
define the same space with equivalent norms.

The study of Wiener's algebra in \cite{fe77-3} gives an example how one
can define a strictly translation invariant norm on $\HFWCOli$,
that is, a norm satisfying
$$  \HFBnorm{T_x f} = \HFBnorm{f}, \quad x \in \HFcG. $$
To achieve this property Reiter replaces the simple and natural norm
$$    \HFnormta  {f} {\HFWLinfli} := \HFsumkZd \HFsupnorm{f \cdot {\bf 1}_{k+Q}} $$
by the equivalent norm (see \cite{re68}, p.12)
$$  \HFnormta {f} {\HFWsp} :=  \sup_{y \in \HFRdst} \HFnormta  {T_y f} {\HFWLinfli}, $$
which is a bit cumbersome. There are essentially two alternative ways
to obtain a strictly translation invariant norm. The first method is
via atomic decompositions, as described in \cite{fe77-3}. Unfortunately
this method works only for local norms which are strictly translation
invariant and the (unweighted) global component $\HFlisp$.

In contrast, the use of a {\it continuous control function} of the form
$$  \kappa(f): y \mapsto  \HFsupnorm{ f \cdot T_y \phi}, $$
for some compactly supported function $\phi \in \HFLinf$ allows us to
replace the discrete summability by a continuous integral. In the
case of Wiener's algebra this means that an equivalent norm on
$\HFWCOliRd$ can be obtained by the expression
$$  \HFnormta {f} \HFWsp := \HFintRd  \kappa(f)(y) dy
= \HFnormta {\kappa(f)} {\HFLiRd}. $$

This change from a partition of unity - here obtained via the
decomposition of $\HFRdst$ into translations along $\HFZdst$ of
the indicator function of a fundamental domain  -  is one of the
key ideas in the paper \cite{fe83}, which can be seen as the
basis for the {\it modern theory of Wiener amalgam spaces}.

They key results of the frequently cited  paper \cite{fe03}
are: 
\begin{enumerate}
  \item Equivalence of discrete norms (defined via BUPUs) and the
  continuous norm, cf. above;
  \item Opening the concept of amalgamation to much more general
     local as well as global norms;
  \item Establishing pointwise multiplier and convolution results
  for these (generalized) Wiener amalgam spaces.
\end{enumerate}

Without repeating the technical details required in order to reach the
full generality of the concept of Wiener amalgams let us summarize
the key ideas involved in the process.

Let us start with the different ingredients required in order to define
these spaces, namely a partition of unity, a local component and a global component. Overall the norm of a Wiener amalgam space $\HFWBC$ allows us to
capture the {\it global behaviour}  (using the solid, translation invariant
BF-space $\HFCspN$) of a local norm, expressed by the norm of some {\it localizable} space $\HFBspN$. We postpone the easy extension of a global
$\HFlqsp$-behaviour to weighted variants to a later section. What is important
to understand the concept of BUPUs (bounded uniform partitions of unity)
is the meaning of the different words:
\begin{enumerate}
  \item {\it Boundedness} of a family $(\phi_i)_{i \in I}$
    of localizing functions (either constituting
   a discrete partition of unity, or creating the continuous control function)  refers to {\it bounded action on} $\HFBspN$: For some constant
   $C > 0$ one has:
   $$   \HFBnorm {f \cdot \phi_i} \leq   C \HFBnorm{f},
   \quad i \in I, f \in \HFBsp.$$

  \item The term uniform refers to the {\it uniform size} of the members of the partition, one could say the diameter of the support of all the
      functions used by the partition of unity is uniformly controlled.
      We write that $|\Psi| \leq \delta$ if $\HFsupp(\psi_i) \subseteq B_\delta(x_i)$ for all $i \in I$ and suitable points $x_i \in \HFRdst$.
\end{enumerate}

Of course the choice of such a partition of unity depends on the function
space $\HFBspN$ used for the local measurements, but generally speaking it will
be a smoothness condition of some form. If we want to measure the local
pieces in some Sobolev norm (or any other classical function space in the
sense of H.~Triebel) we could just assume a sufficiently high order or
smoothness. Since the classical B-splines form such BUPUs they are good examples of this type.

However, in some cases  smoothness in the classical
sense is not necessary, and  it may be enough to have boundedness in
the Fourier algebra $\HFFLiRdN$. In fact, the well known convolution
relation $\Lisp \ast \HFLpsp \subseteq \HFLpsp$ implies a pointwise relationship
$ \HFFLisp \cdot \HFFLpsp \subseteq \HFFLpsp$, hence for $\HFBsp = \HFFLpsp$ it
suffices to assume that the partition is bounded in the Fourier algebra
sense, with $\HFnormta {f} {\HFFLisp} = \HFnormta {\HFhatf} \Lisp$.

The first and natural choice for BUPUs are the so-called
 {\it regular} BUPU, of the form $(T_\lambda \phi)_{\HFlainLa}$,
 where $\Lambda \lhd \HFRdst$ is some lattice in the group $\HFRdst$.
Again, the classical B-splines, for example the basis for the
space of {\it cubic spline functions} on $\HFRst$, are a good example.

There are however good arguments to for taking more general approach (as done
in \cite{fe03}), keeping the boundedness and uniformity described above
in mind, but allowing much more freedom in the choice of the individual
pieces. This is first of all important in the context of irregular
sampling, where sampling points are given and the so-called Voronoi
domains (of sets defined by the nearest neighborhood relationship)
have to adapt to the given set of sampling points. In this context
the notion of {\it relatively separated} and {\it well-spread} point
sets $X = (x_i)_{i \in I}$ arises. We do not go deeper in this direction,
but mention that in such a case one has to control the overlap between
the neighboring members of an (irregular) partition of unity. In other
words, in the most general version a BUPU
$\HFPsifam$  should satisfy:  For some constant $C_\Psi > 0$ one has
$ \sup_{i \in I} \# \{ j \HFsuth \psi_j \cdot \psi_i \neq 0 \} < C_\Psi. $

Such a condition is relevant whenever it is not possible to guarantee the
existence of a regular BUPU, but more importantly whenever it comes to
discuss {\it joint refinements} for BUPUs (in the spirit of Riemannian sums),
see for example \cite{fe22}.

Later on it turned out that such a finite overlap condition is the only
constraint that has to be preserved in the even more general context
of {\it decomposition spaces} as described in \cite{fegr85} and \cite{fe87},
see the next section about duality and multipliers.


\section{Duality and Interpolation}

The family of Wiener amalgam spaces has a number of good properties. Among others it is (essentially) closed under duality and complex interpolation,
and inclusion can be established coordinatewise.
This allows us to obtain interesting results by verifying the details only
for limiting cases and then apply interpolation arguments. We will demonstrate
this by recalling two versions of the Hausdorff-Young Theorem in the context
of Wiener amalgam spaces.

But first let us recall the characterization of dual spaces for Wiener amalgams. While this is no problem for the classical case, where a given
measurable function is split into pieces with disjoint support so that
a direct sum decomposition is available, one has to be a bit more careful
for the case of general (smooth) BUPUs. This is the reason why the corresponding result is a bit hidden in the literature. Actually it appears
in an even more general context in \cite{fegr85}, where it becomes clear
that the finite overlap condition on the support of the members of the BUPU
play an important role.

%
While the general result (see Theorem 2.8 in \cite{fegr85})
might be not so easy
to digest, we present an simple variant in the context of tempered distributions.
\begin{theorem}\label{WBCduality}
Given a localizable Banach space $\HFBspN$  and a solid translation invariant Banach space $\HFCspN$ of tempered distributions such that $\HFScRd$ is dense
in both $\HFBspN$ and $\HFCspN$. Then their dual spaces are also Banach spaces
of tempered distributions and $\HFWsp(\HFBsp',\HFCsp')$ is well defined. Moreover, $\HFScRd$ is dense in $\HFWBCN$ and we have
$$  \HFWBC' = \HFWBPCP. $$
\end{theorem}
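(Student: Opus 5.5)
The plan is to work with a discrete description of the norm. We fix a regular BUPU $\Psi=(T_\lambda\phi)_{\lambda\in\Lambda}$ with $\phi\in\HFScRd$ compactly supported, bounded on $\HFBsp$, and chosen so that each $\psi_\lambda$ meets only the $\psi_\mu$ with $\mu$ in the finite set $\lambda+F$. By the equivalence of discrete and continuous norms, $\|f\|_{\HFWBC}\asymp \|(\|f\psi_\lambda\|_\HFBsp)_\lambda\|_{\HFCsp_d}$, where $\HFCsp_d$ is the discrete sequence space associated with $\HFCsp$ (for $\HFCsp=\HFLsp^q$ this is $\ell^q(\Lambda)$). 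We also fix $\widetilde\psi_\lambda=\sum_{\mu\in\lambda+F}\psi_\mu$, which equals $1$ on $\HFsupp\psi_\lambda$.

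First, the preliminaries. Since $\HFScRd$ is dense and continuously embedded in $\HFBsp$ and $\HFCsp$, the restriction maps $\HFBsp'\to\HFScPRd$ and $\HFCsp'\to\HFScPRd$ are injective. Hence $\HFBsp'$ and $\HFCsp'$ are Banach spaces of tempered distributions. The space $\HFBsp'$ is localizable, since multiplication by $\psi_\lambda$ is the adjoint of a bounded operator. The space $\HFCsp'$ is solid and translation invariant, with $\HFCsp_d'\cong(\HFCsp_d)'$. So $\HFWBPCP$ is well defined.

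Density of $\HFScRd$ in $\HFWBC$ comes next. For a finite subset $K\subset\Lambda$, the truncation $\sum_{\lambda\in K}\psi_\lambda f$ converges to $f$. This needs the finitely supported sequences to be dense in $\HFCsp_d$, which I would deduce from the density of $\HFScRd$ in $\HFCsp$ (via absolute continuity of the norm). Each piece $\psi_\lambda f\in\HFBsp$ is then approximated by some $g\in\HFScRd$, and $\widetilde\psi_\lambda g$ is a Schwartz function with compact support. Its $\HFWBC$-norm is controlled by finitely many local $\HFBsp$-norms.

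Next, the easy inclusion $\HFWBPCP\subseteq\HFWBC'$. Take $\sigma\in\HFWBPCP$ and $f\in\HFScRd$, and write
$$\langle \sigma,f\rangle=\sum_\lambda\langle \sigma,\psi_\lambda f\rangle=\sum_\lambda\langle\widetilde\psi_\lambda\sigma,\psi_\lambda f\rangle.$$
This gives $|\langle\sigma,f\rangle|\le\sum_\lambda\|\widetilde\psi_\lambda\sigma\|_{\HFBsp'}\|\psi_\lambda f\|_\HFBsp$. The finite overlap bounds the sequence $(\|\widetilde\psi_\lambda\sigma\|_{\HFBsp'})$ in $\HFCsp_d'$ by a constant times $\|\sigma\|_{\HFWBPCP}$, since $\HFCsp_d'$ is translation invariant. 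Applying the Hölder-type pairing of $\HFCsp_d$ with $\HFCsp_d'$, and then using density, $\sigma$ extends to a bounded functional.

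The converse is the main obstacle. Take $\ell\in\HFWBC'$ and let $\sigma=\ell|_{\HFScRd}\in\HFScPRd$. We must show that $\|\psi_\lambda\sigma\|_{\HFBsp'}$ lies in $\HFCsp_d'$. The plan uses the isometric embedding $J:f\mapsto(\psi_\lambda f)_\lambda$ of $\HFWBC$ into the mixed space $\HFCsp_d(\HFBsp)$ (for the equivalent discrete norm), and extends $\ell\circ J^{-1}$ from the range of $J$ by Hahn–Banach. The dual of $\HFCsp_d(\HFBsp)$ is $\HFCsp_d'(\HFBsp')$; this needs absolute continuity of the $\HFCsp_d$-norm, obtained above. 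The extension is therefore represented by a sequence $(h_\lambda)$ with $\|(\|h_\lambda\|_{\HFBsp'})\|_{\HFCsp_d'}\lesssim\|\ell\|$ and $\ell(f)=\sum_\lambda\langle h_\lambda,\psi_\lambda f\rangle$. Hence $\sigma=\sum_\lambda\psi_\lambda h_\lambda$ in the distributional sense. Finally,
$$\psi_\mu\sigma=\sum_{\lambda\in\mu-F}\psi_\mu\psi_\lambda h_\lambda,$$
a finite sum. Boundedness of the BUPU on $\HFBsp'$ then gives $\|\psi_\mu\sigma\|_{\HFBsp'}\le C\sum_{\lambda\in\mu-F}\|h_\lambda\|_{\HFBsp'}$, and translation invariance of $\HFCsp_d'$ finishes the estimate. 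This is exactly where the finite overlap condition (cf. \cite{fegr85}) enters. The delicate points I expect are the identification $(\HFCsp_d(\HFBsp))'=\HFCsp_d'(\HFBsp')$ for general solid $\HFCsp$, and the compatibility between $\HFCsp$ and its discrete counterpart under duality.
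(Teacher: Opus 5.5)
Your proof is correct. The inclusion $\HFWBPCP\subseteq\HFWBC'$ is handled as in the paper: your $\widetilde\psi_\lambda$ is the paper's auxiliary $\psi_i^*$. For the converse, however, you take a genuinely different route.

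The paper splits the index set into finitely many classes $I_l$ on which the $\psi_i^*$ have pairwise disjoint supports. On each class $\HFWBC$ then behaves like a direct sum, and the functional can be tested piece by piece on functions in $\HFWBC$ itself. If $\sigma\notin\HFWBPCP$, one of the finitely many subseries $\sum_{i\in I_l}\sigma\psi_i^*$ would fail to lie in $\HFWBPCP$, which contradicts boundedness.

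You instead embed $\HFWBC$ into the vector-valued space $\HFCsp_d(\HFBsp)$, extend by Hahn--Banach, and use $(\HFCsp_d(\HFBsp))'=\HFCsp_d'(\HFBsp')$. The finite overlap enters only at the very end, to pass from the representing family $(h_\lambda)$ to the pieces $\psi_\mu\sigma$.

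Your version is direct and quantitative: it gives $\|\sigma\|_{\HFWBPCP}\le C\|\ell\|$ without a colouring lemma or a contradiction argument, and it transfers verbatim to decomposition spaces. The paper's version stays inside $\HFWBC$ and avoids both Hahn--Banach and the vector-valued duality.

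Both ultimately rest on the identification $(\HFCsp_d)'=(\HFCsp')_d$, which you rightly flag. You also supply the density of $\HFScRd$ in $\HFWBC$ and the well-definedness of $\HFWBPCP$, which the paper's sketch passes over.

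Incidentally, you can drop Hahn--Banach as well. By finite overlap, $R\big((h_\lambda)_\lambda\big):=\sum_\lambda\widetilde\psi_\lambda h_\lambda$ is bounded from $\HFCsp_d(\HFBsp)$ into $\HFWBC$, and it satisfies $R\circ J=\mathrm{id}$. Hence $\ell\circ R$ is represented directly by $(\widetilde\psi_\lambda\sigma)_\lambda\in\HFCsp_d'(\HFBsp')$.
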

\begin{proof}
We just give an outline of the key arguments for this basic fact.
Obviously the setting of BUPUs guarantees the limited overlap condition
required in the abstract setting of {\it decomposition spaces}. The auxiliary
systems $\psi_i^* := \sum_{\psi_j \cdot \psi_i \neq 0} \psi_j $
are thus also bounded and satisfy $ \psi_i = \psi_i \cdot \psi_i^*$
for any $i \in I$. In fact, they allow (via the same discrete solid
sequence space norm) to give an equivalent norm based on
$\HFBnorm{f \cdot \psi_i^*}$. Thus it is easy to verify that
any element of $\HFWBPCP$ defines a bounded linear functional on $\HFWBCN$.

For the converse a so-called pigeonhole  principle can be applied. One
starts by showing that the index set $I$ of the BUPU can be split into
finitely many subsets $I_l, 1 \leq l \leq L$, such that one has (for example)
$ \psi_r^* \cdot \psi_r^* = 0$ for $r, s \in I_l$ and $r \neq s$.
Assume now that an abstract functional on $\HFWBCN$ is given, which of
course will be represented locally an element $\sigma \in \HFBFsp$. If
$\sigma \notin \HFWBPCP$ then also one of the finitely many subseries
$ \sum_{i \in I_l} \sigma \psi_i^* $ (for some $l$)
would not belong to $\HFWBPCP$, which can be shown to lead to a contradiction.
\end{proof}
\begin{remark}
The above argument indicated that the disadvantage arising from the use
of smooth partitions of unity, which necessarily imply some overlapping
between the members of the partition, makes things more complicated, but
the control of the overlap allows to still derive the expected results,
meaning here that duality can be obtained in the coordinate-wise sense.
\end{remark}

The key result concerning pointwise multiplier results follow the
same principle, see Cor.2.4 (p.107)  and Thm.2.11 (p.110) of \cite{fegr85}. Results
about (complex) interpolation are derived already in \cite{fe81-1}
for Wiener amalgams specifically, but the same methods apply in the
context of decomposition spaces, using the idea of retracts to
sequence spaces consisting of function spaces, which is  a
method used  in recent papers, see e.g.\ \cite{suyayu24}.

%

The connection between continuous covering systems and discrete (admissible = with finite overlap condition) is studied in  \cite{fe87}. Starting from a {\it continuous covering} it describes a selection method that leads to equivalent discrete coverings, generalizing the norm-equivalence of continuous/selective
norms (via control functions) and discrete norms of Wiener amalgam spaces (cf. \cite{fe83}) using BUPUs. It should also be noted that Theorem 4.2 of \cite{fe87} provides a construction of a BAPU (admissible partition of unity)
in such a situation which only makes use of the finite overlap condition
combined with the algebra property of the algebra of pointwise multipliers
of $\HFBspN$, but {\it not} a pointwise inversion result that is typically used
in the construction of partitions of unity (e.g., in the context of smooth
manifolds).

\section{Convolution of Bounded Measures}

The use of BUPUs goes far beyond the possibility of defining Wiener amalgam spaces. A summary of the various applications is given in \cite{fe24-2}. This
paper also contains various applications, including the construction of
Wiener amalgam spaces and modulation spaces, which are more or less Wiener
amalgam spaces on the Fourier transform side, with local $\HFFLpsp$-behaviour.

In this short section we just want to point out that implicitly the
approach to the convolution of bounded measures is based on ideas closely
connected with the method of convolution of Wiener amalgams.  In the paper
\cite{fe17}  novel mathematical approach to the theory of translation invariant linear systems is presented, which is then the basis for the results in  \cite{fe22}.

The paper \cite{fe17} presents an approach to the convolution of bounded measures without using measure theory, using the following  key ideas:
\begin{enumerate}
  \item Define $\HFMbRdN$ as the dual space to $\HFCORdN$, the Banach algebra
  of all continuous, complex-valued functions which vanish at infinity,
  with the natural $\sup$-norm.
  \item In this way the algebra structure of $\HFCORd$ can be transferred
  to its dual space by adjoint action, or in a more concrete fashion
   by setting $ \mu \cdot h(f) = \mu(h \cdot f)$ for $\mu \in \HFMbRd$ and
   $h,f \in \HFCORd$.
  \item An important step is then to identify $\HFMbRd$ with $\HFWsp(\HFMbsp,\HFlisp)(\HFRdst)$, in fact, by showing that for any BUPU
      $\HFPsifam$ one has
      $$  \sum_{i \in I} \HFMbnorm {\mu \cdot \psi_i} = \HFMbnorm {\mu},
      \quad \mu \in \HFMbRd.$$
  \item Using this fact one can show that there is a natural identification
  of the space of bounded linear operators commuting with translations
  and $\HFMbRdN$, where $\mu \in \HFMbRd$ corresponds to the (external)
  convolution operator on $\HFCORd$, given pointwise by
  $$  C_\mu(f)(x) = \mu \ast f (x) =  \mu(T_x \HFfchk), \quad f \in \HFCORd, $$
  with $\HFfchk(x) = f(-x)$. It is even isometric with respect to the
  natural norms (functional norm for $\mu$ and operator norm for $C_\mu$).
  \item Since these operators obviously form a Banach algebra one can transfer the algebra structure from the operators back to the
      generating bounded measures.
  \item By showing that $\delta_x$, with $\delta_x(f) = f(x)$,  corresponds
  to the translation operator $T_x f = \delta_x \ast f$
  and by taking appropriate
  limits one can derive that convolution is not only associative but also
  commutative.
\end{enumerate}

\section{Sobolev Algebras}

Sobolev spaces have been introduced in analysis in order to
describe the smoothness of functions. One can start with concepts such
as {\it continuity} or {\it differentiability}.
The most natural way to describe increasing smoothness is the
number of continuous derivatives a functions has. For example,
we call a function $f(x)$ on $\HFRst$ a $C^{(k)}$-function
for some $k \in \HFNst$ if $f,f',f''$ etc. up to the
derivative of order $k$, written as $f^{(k)}$ are all well-defined and
continuous functions.

Let us look at B-splines, the convolution powers of the boxcar
functions $\HFboxcar$:  we find that the B-spline of order $2$ (or
degree $1$), i.e.\ the triangular function $\Delta$, is continuous
but not everywhere differentiable, hence it is in $\HFCcRd$ resp.\
$C^{(0)}(\HFRst)$. The next one is continuously differentiable
(but only first order), and locally glued together by quadratic
polynomials. Finally cubic splines are constituted by locally
cubic polynomials (degree $3$, order $4$), which have continuous
derivatives up to order $2$ (continuous {\it curvature}), so
they constitute $C^{(2)}$-functions and are thus very suitable
to plot curves (with continuously changing curvature).

For $\HFLtRd$ one can develop a similar (pointwise) concept, by
assuming that the derivative exists almost everywhere and that
the class constituted in this way also belongs to $\HFLtR$,
and so on (by induction). A more elegant version makes use
of the theory of tempered distributions in the sense of
L.~Schwartz. Finally Fourier methods allow a more flexible  description of
smoothness at a fine scale.

Looking at the behaviour of the derivative operation
(which is of course a [unbounded] translation invariant
operator) we expect that it is a multiplication operator
on the Fourier transform side.  Using the exponential law it is
easy to compute the derivative of $x \mapsto \chi_s(x) = exp(2 \pi i s x)$,
which is $\chi'_s(x) =  2 \pi i s \chi_s(x)$. Hence it is not surprising that the standard formula (for our normalization of the FT) is:
\begin{lemma} \label{differentiation}
Assume that $f \in \HFLiRd \cap \HFCORd$ has a continuous,
integrable derivative (that is, $f$ is an {\it absolutely
continuous functions} in the classical sense), then we have
\begin{equation}\label{derivFT01}
 { \HFFT(f')(s) = 2 \pi \, i\,  s \,  \HFhatf(s)}, \quad s \in \HFRst.
\end{equation}
\end{lemma}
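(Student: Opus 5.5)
We prove the formula by integrating by parts on a bounded interval and then passing to the limit. The hypotheses are set up for exactly this: $f$ and $f'$ are integrable and $f$ vanishes at infinity. Although the statement mentions $\HFRst^d$, the formula is one-dimensional ($f'$ and $s \in \HFRst$), so we work on $\HFRst$. We use the normalization $\HFhatf(s) = \int_\HFRst f(x) e^{-2\pi i s x}\,dx$, for which $\chi_{-s}'(x) = -2\pi i s\, \chi_{-s}(x)$ is the computation preceding the lemma.

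First, $f' \in \Lisp(\HFRst)$, so $\HFFT(f')(s) = \int_\HFRst f'(x)\chi_{-s}(x)\,dx$ is an absolutely convergent integral. By dominated convergence it equals
$$ \lim_{R\to\infty} \int_{-R}^{R} f'(x) e^{-2\pi i s x}\,dx . $$
Fix $s$ and $R>0$. The function $f$ is $C^{(1)}$ (absolutely continuous suffices), and $x\mapsto e^{-2\pi i s x}$ is smooth. The classical integration by parts formula on the compact interval $[-R,R]$ then gives
$$ \int_{-R}^{R} f'(x) e^{-2\pi i s x}\,dx = \Big[ f(x) e^{-2\pi i s x}\Big]_{-R}^{R} + 2\pi i s \int_{-R}^{R} f(x) e^{-2\pi i s x}\,dx . $$

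Now let $R \to \infty$. The boundary term has modulus at most $|f(R)|+|f(-R)|$. Since $f \in \HFCORd$, this tends to $0$; this is exactly where vanishing at infinity is used. Because $f \in \Lisp$, dominated convergence again shows that the remaining integral tends to $2\pi i s\,\HFhatf(s)$. Combining the two limits yields (\ref{derivFT01}) for every $s\in\HFRst$.

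There is no real obstacle. The only delicate point is the boundary term, and the assumption $f\in\HFCORd$ handles it directly. This assumption is in fact redundant: $f(x) = f(0)+\int_0^x f'$, and $f'\in\Lisp$ forces the limits $\lim_{x\to\pm\infty} f(x)$ to exist, after which $f\in\Lisp$ forces them to be $0$.

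If one wants the $d$-dimensional version for partial derivatives, $\HFFT(\partial_k f)(s) = 2\pi i s_k \HFhatf(s)$, the same argument applies. Use Fubini to integrate first in the variable $x_k$, apply the one-dimensional result on almost every line, and then integrate over the remaining variables. This requires that, for almost every choice of the remaining variables, $f$ is integrable along the line and vanishes at its ends; both follow from Fubini and $f\in\HFCORd$.
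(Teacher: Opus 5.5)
Your proof is correct. You integrate by parts on $[-R,R]$, kill the boundary term $|f(R)|+|f(-R)|$ using $f\in\HFCORd$, and pass to the limit by dominated convergence using $f,f'\in\Lisp$. This gives the formula exactly with the normalization $\widehat f(s)=\int f(x)e^{-2\pi i s x}\,dx$. Your remark that the decay hypothesis is redundant is also right.

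The paper gives no proof of this lemma. It quotes the formula as standard and motivates it only by the computation $\chi_s'=2\pi i s\,\chi_s$. Your argument is the rigorous version of that heuristic, since integration by parts moves the derivative onto the character $\chi_{-s}$.

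In the $d$-dimensional extension, note that Fubini also gives $\partial_k f(x',\cdot)\in\Lisp$ on almost every line, not only $f(x',\cdot)\in\Lisp$. You need both to apply the one-dimensional result there.
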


In the context of $\HFLtRN$ one can describe therefore the
number of derivatives by the square integrability of
the corresponding derivatives in $\HFLtR$, and thus
by the membership of $\HFhatf$ in some corresponding
weighted $\HFLtsp$-space on $\HFRst$. This is how {\it Sobolev
spaces}
can be introduced via the Fourier transform
as weighted $\HFLtsp$-spaces. This approach even allows us to
talk about  fractional differentiability.

For $s \geq 0$ the usual Sobolev spaces $\HFHsRdN$ are defined with the help of
 polynomial weights of the form
$v_s(y) = (1+|y|^2)^{s/2}$.
%
\begin{definition} \label{HsRddef}
\begin{equation} \label{HSRddef00}
\HFHsRd := \{f \HFsuth \HFhatf \cdot v_s \in \HFLtRd \}
\end{equation}
with the norm $ \HFnormta f \HFHsRd :=
\HFnormta {\HFhatf v_s} \HFLtRd$.
\end{definition}
In other words, Sobolev spaces are images (under the inverse Fourier
transform) of the weighted spaces $\HFLtvsRd := \{ f \HFsuth f v_s \in \HFLtRd \}$.

For the derivation of some important results concerning results Sobolev spaces we make use of the fact that $v_s$ (for $s \geq 0$) is weakly subadditive
weight function, see \cite{br75}, \cite{fe79} and \cite{gr07} for a discussion of different types of weight functions. We have
\begin{definition}  \label{WSA-def}
A weight function $w$ is called {\it weakly subaddititve} (WSA) if  there
exists some $C>0$ such that
\begin{equation} \label{WSA-def1}
 w(x+y) \leq  C \cdot [ w(x) + w(y)]   \quad \forall x,y \in \HFRdst.
\end{equation}
\end{definition}
 For WSA weights $w$ we have the following result:
\begin{proposition} \label{WSAconv}
For any WSA weight $w$ we have:
 $\HFBsp := \HFLiRd \cap \HFLpwRd$
with natural norm $\HFBnorm{f} = \Lino{f} +
\HFnormta {fw} {\HFLpsp}$
is a Banach algebra with respect to convolution, for any $p \in [1,\infty]$.
\end{proposition}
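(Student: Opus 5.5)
The plan is to use the pointwise bound that WSA gives for the weighted convolution. For $f,g \in \HFBsp$ and almost every $x$, we have
$$ |f\ast g(x)|\, w(x) \leq \HFintRd |f(y)|\,|g(x-y)|\, w\big(y+(x-y)\big)\, dy \leq C \big( (|f| w) \ast |g| \big)(x) + C\big( |f| \ast (|g| w)\big)(x). $$
Here we apply (\ref{WSA-def1}) with the arguments $y$ and $x-y$. Both terms on the right are convolutions of an $\Lisp$-function with an $\HFLpsp$-function. Indeed, $|g|\in\Lisp$ and $|f|w \in \HFLpsp$ by assumption, and symmetrically for the second term.

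Next we invoke the classical relation $\Lisp \ast \HFLpsp \subseteq \HFLpsp$, with $\|h\ast k\|_p\leq \|h\|_1\|k\|_p$ for $1\leq p\leq\infty$ (Minkowski's integral inequality, or duality for $p=\infty$). It yields
$$ \|(f\ast g) w\|_p \leq C\big( \|fw\|_p \|g\|_1 + \|f\|_1 \|gw\|_p \big). $$
Together with $\|f\ast g\|_1\leq \|f\|_1\|g\|_1$ this gives $\HFBnorm{f\ast g}\leq C' \HFBnorm f \HFBnorm g$ with $C'=\max(1,2C)$. Hence $\HFBsp$ is closed under convolution. If a constant-free submultiplicative estimate is wanted, we can renormalize $\HFBsp$ with an equivalent norm, e.g.\ $\Lino f + \|fw\|_p$ multiplied appropriately, or use the standard operator-norm renorming of a Banach algebra. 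Associativity and bilinearity of convolution are inherited from $\Lisp$.

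It remains to check that $\HFBsp$ is complete. A Cauchy sequence in $\HFBsp$ is Cauchy in $\Lisp$ and in the weighted $\HFLpsp$. A subsequence converges almost everywhere, so the two limits agree almost everywhere. Here we assume, as is implicit, that $w$ is a positive measurable weight, so the weighted space $\HFLpwRd$ is complete and embeds into measurable functions.

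The step needing the most care is justifying the pointwise estimate, namely measurability and the a.e.\ finiteness of $f\ast g$. This follows since $|f|\ast|g|\in\Lisp$ by Fubini–Tonelli, so the convolution integral converges absolutely almost everywhere. Everything else is routine.
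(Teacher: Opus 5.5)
Your proof is correct and follows essentially the same route as the paper. You use the same pointwise bound $|f\ast g|\,w \leq C\big((|f|w)\ast|g| + |f|\ast(|g|w)\big)$, obtained by splitting the weight via the WSA inequality. You then combine $\Lisp \ast \HFLpsp \subseteq \HFLpsp$ with the trivial $\Lisp$-estimate, and rescale the norm by the constant to make it submultiplicative.

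Your additional checks are details the paper leaves implicit: a.e.\ absolute convergence of $f\ast g$ via Fubini--Tonelli, and completeness of $\HFBsp$ using positivity of $w$. Your constant $\max(1,2C)$ can even be sharpened to $\max(1,C)$.
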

\begin{proof}
The crucial {\it pointwise estimate} is the following one:
\begin{equation} \label{WSAestim02}
  |[f \ast g] \cdot w|(x) \leq C( |f| \ast |gw| + |fw| \ast |g|)(x),
  \quad \HFxRd.
\end{equation}
In fact, we just have to split the weight factor and use that
$$ w(x) \leq  C \left( w(x-y) + w(y)\right ), \quad x,y \in \HFRdst.$$
Since the estimate
$$ \HFnormta{f\ast g} \Lisp \leq \HFnormta{f}  \Lisp \HFnormta{g} \Lisp $$
is obvious,   the key estimate  follows from (\ref{WSAestim02}) giving us
$$ \HFnormta{ [f \ast g]w} \HFLpsp \leq
C \left(\Lino{f}\HFnormta {gw}{\HFLpsp} +\Lino{g}\HFnormta {fw}\HFLpsp \right).$$
Combining these two estimates we have
\begin{equation}\label{LiLwpest}
  \HFBnorm{f\ast g} \leq C \HFBnorm{f} \HFBnorm{g}, \quad f,g \in \HFBsp.
\end{equation}
Upon replacing the natural norm by $C \HFBnorm{f}$ the constant $C$ can
be assumed to be equal to $1$.
\end{proof}

\begin{corollary} \label{LtwL1}
Assume that $w$ is a WSA-weight. Then
$\HFLtwRdN$ is a Banach algebra with respect to convolution
if  $1/w \in \HFLtRd$. In this case Sobolev's embedding principle
applies, i.e.\ $\HFHsRdN \HFhkr \HFCORdN$.
\end{corollary}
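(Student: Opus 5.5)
The plan is to reduce the corollary to Proposition \ref{WSAconv} with $p=2$. The key observation is that when $1/w \in \HFLtRd$, the weighted space $\HFLtwRd$ embeds continuously into $\HFLiRd$. For $f \in \HFLtwRd$, Cauchy--Schwarz gives
$$ \Lino{f} = \HFintRd |f(x)| w(x) \cdot \frac{1}{w(x)}\, dx \leq \HFnormta{fw}{\HFLtsp} \, \HFnormta{1/w}{\HFLtsp}. $$
Hence $\HFLtwRd = \HFLiRd \cap \HFLtwRd$ as sets. The natural norm $\Lino{f} + \HFnormta{fw}{\HFLtsp}$ of Proposition \ref{WSAconv} is then equivalent to $\HFnormta{f}{2,w} = \HFnormta{fw}{\HFLtsp}$, with constants $1$ and $1+\HFnormta{1/w}{\HFLtsp}$. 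Proposition \ref{WSAconv} then gives closedness of $\HFLtwRd$ under convolution together with $\HFnormta{f\ast g}{2,w} \leq C' \HFnormta{f}{2,w}\HFnormta{g}{2,w}$. Rescaling the norm by $C'$ makes $\HFLtwRdN$ a Banach algebra. Completeness is clear, since it is a weighted $\HFLtsp$-space.

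For the Sobolev embedding, take $w = v_s$, which is WSA as noted before Definition \ref{WSA-def}. The condition $1/v_s \in \HFLtRd$ means $\int (1+|y|^2)^{-s}\,dy < \infty$, that is $s > d/2$. This is the implicit range of $s$ in the statement. Given $f \in \HFHsRd$, we have $\HFhatf \in \HFLtsp_{v_s}(\HFRdst)$, and by the embedding above $\HFhatf \in \HFLiRd$ with $\Lino{\HFhatf} \leq \HFnormta{1/v_s}{\HFLtsp} \HFnormta{f}{\HFHs}$.

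Next I apply Fourier inversion and the Riemann--Lebesgue lemma. Here $f$, as an $\HFLtsp$-function, is represented by $\HFIFT \HFhatf$, which is a continuous function vanishing at infinity. Moreover $\HFsupnorm{f} \leq \Lino{\HFhatf} \leq C\HFnormta{f}{\HFHs}$, which proves $\HFHsRd \HFhkr \HFCORd$ continuously.

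The only point requiring care is identifying the $\HFLtsp$-function $f$ with the continuous function $\HFIFT\HFhatf$. This uses that the Fourier transforms on $\HFLiRd$ and $\HFLtRd$ agree on $\HFLiRd\cap\HFLtRd$, for instance via approximation by $\HFScRd$. The algebra property itself requires no new estimate beyond Proposition \ref{WSAconv}. One could also note that the algebra property transfers to $\HFHsRd$ under pointwise multiplication, although the statement does not ask for this.
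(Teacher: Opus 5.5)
Your proof is correct and follows essentially the same route as the paper. Cauchy--Schwarz gives $\HFLtwRd \hookrightarrow \HFLiRd$ with equivalent norms, so Proposition~\ref{WSAconv} with $p=2$ yields the algebra property, and applying the Fourier transform gives $\HFHsRd \hookrightarrow \HFFLiRd \hookrightarrow \HFCORd$. You also make explicit two points the paper leaves implicit: that $1/v_s \in \HFLtRd$ amounts to $s > d/2$, and that the $\HFLtsp$-function $f$ agrees with the continuous function $\HFIFT \HFhatf$.
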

\begin{proof}
This is a simple consequence of the fact that
$ f =  (fw) \cdot 1/w \in \HFLtsp \cdot \HFLtsp \subseteq \Lisp $
due to the Cauchy-Schwartz inequality. Moreover, by the same
argument one can show that the natural norm of $\Lisp \cap \HFLtwsp$
is equivalent to the natural norm on $\HFLtwsp$.

Applying the Fourier transform on both sides we obtain that $\HFHsRdN\hookrightarrow \HFFLiRdN
\HFhkr \HFCORdN$, with the natural norm estimates.
\end{proof}

\begin{remark}
Note that the above argument shows that $\HFHsRd$ is not only continuously
embedded into $\HFLtRdN$ (according to Plancherel, since $\HFLtwRd \hookrightarrow
\HFLtRd$), but also in $\HFCORdN$, hence into $\HFLtsp \cap \HFCOsp (\HFRdst)$ with the
natural norm (sum of the two norms). However, using the Hausdorff-Young
Theorem for Wiener amalgam spaces we can show  $\HFHsRd \hookrightarrow \HFWCOltRd$,
by the argument:   $\HFLtwRd = \HFWsp(\HFLtsp,\HFltwsp) = \HFWsp(\HFFT \HFLtsp, \HFltwsp)
\hookrightarrow \HFWsp(\HFFT \HFLtsp, \HFlisp)$ (using again Cauchy Schwartz in the
last inclusion), which by a variant of the Hausdorff-Young inequality for
Wiener amalgam spaces gives $$\HFHsRd = \HFIFT \HFLtwsp \hookrightarrow \HFIFT \HFWsp(\HFFT \HFLtsp, \HFlisp)
\hookrightarrow  \HFWsp(\HFFLisp,\HFltsp)  \hookrightarrow \HFWCOlt(\HFRdst),$$
which is a space strictly contained in $\HFLtsp \cap \HFCOsp(\HFRdst)$.
The reader is referred to \cite{fe90} for $p$-version of similar results.
\end{remark}

Note that $\HFHsRd$ is not only a Banach space with respect to the natural
norm $\|f\|_\HFHs := \| \hat f w\|_2$, but even a Hilbert space,  because the norm
obviously comes from the scalar product
\begin{equation}\label{HS-scalprod}
    \langle f, g \rangle_\HFHs :=  \langle \hat f w ,  \hat g w \rangle_\HFLtsp =
    \HFintRd \hat f(s)  \, \overline{ \hat g(s)} \, \,  w^2(s) ds.
\end{equation}

\begin{corollary} \label{Hs-RKHS}
$\HFHsRd$ is a reproducing kernel Hilbert space, i.e.\ for each $t \in \HFRdst$
the Dirac measure $\delta_x:  f \mapsto f(x)$ is a continuous linear functional
on the Hilbert space $\HFHsRd$. The kernel $K(x,y)$, consisting of functions
$K( \cdot,y) = k_x(y)$ in $\HFHsRd$ with
\begin{equation}\label{Hs-RKHSphi}
    f(x) = \langle f ,  k_x \rangle_\HFHs  \quad \mbox{for all} \quad x \in \HFRdst, f \in \HFHs,
\end{equation}
is obtained  as collections of shifts $T_x \varphi$, with $\varphi = \HFIFT(1/{w^2})$.
\end{corollary}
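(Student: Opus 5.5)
We work under the standing assumption of Corollary \ref{LtwL1}, namely that $w = v_s$ satisfies $1/w \in \HFLtRd$ (i.e.\ $s > d/2$). The plan is to derive everything from the Fourier description of the norm in Definition \ref{HsRddef} together with the embedding $\HFHsRd \hookrightarrow \HFFLiRd \hookrightarrow \HFCORd$ from Corollary \ref{LtwL1}.

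\emph{Step 1: continuity of point evaluations.} For $f \in \HFHsRd$ the function $\HFhatf = (\HFhatf w)\cdot (1/w)$ is integrable by Cauchy--Schwarz, so Fourier inversion holds pointwise:
$$ f(x) = \HFintRd \HFhatf(s)\, e^{2\pi i s x}\, ds . $$
Hence
$$ |f(x)| \leq \HFnormta{\HFhatf}{\Lisp} \leq \HFnormta{1/w}{\HFLtsp}\, \HFnormta{f}{\HFHs} . $$
Thus $\delta_x$ is a bounded linear functional on $\HFHsRd$, with norm at most $\HFnormta{1/w}{\HFLtsp}$, uniformly in $x$.

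\emph{Step 2: identifying the kernel.} Rewrite the inversion formula using the scalar product (\ref{HS-scalprod}) by inserting $w^2/w^2$:
$$ f(x) = \HFintRd \HFhatf(s)\, \overline{ \frac{e^{-2\pi i s x}}{w^2(s)} }\, w^2(s)\, ds = \langle f, k_x\rangle_\HFHs , $$
where $k_x$ is defined by $\widehat{k_x}(s) = e^{-2\pi i s x}/w^2(s)$. It remains to check two things about $k_x$:
\begin{enumerate}
\item $k_x \in \HFHsRd$. Indeed, $\widehat{k_x}\, w = e^{-2\pi i s x}/w$ belongs to $\HFLtRd$ precisely because $1/w \in \HFLtRd$. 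Moreover $1/w^2 \in \Lisp$, so $\varphi := \HFIFT(1/w^2)$ is a genuine continuous function in $\HFCORd$ and even lies in $\HFHsRd$.
\item $k_x$ is a shift of $\varphi$. Modulation on the Fourier side corresponds to translation, so $k_x = T_x \varphi$ up to the sign convention: $\widehat{T_x\varphi}(s) = e^{-2\pi i s x}\widehat\varphi(s)$. Since $w$ is even, $\varphi$ is real and even, and the convention issues (conjugation, reflection) disappear.
\end{enumerate}
Setting $K(x,y) = k_x(y) = \varphi(y-x)$ then gives (\ref{Hs-RKHSphi}).

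\emph{Main obstacle.} The only delicate point is justifying that the pointwise values $f(x)$, which for a tempered distribution in $\HFHsRd$ are a priori undefined, are given by the inversion integral. This is exactly what the embedding $\HFHsRd \hookrightarrow \HFFLiRd \hookrightarrow \HFCORd$ from Corollary \ref{LtwL1} provides: it selects the continuous representative of $f$. Once that is in place, Riesz representation is not even needed, since the kernel is produced explicitly.
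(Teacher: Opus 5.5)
Your proof is correct and follows essentially the paper's argument: integrability of $\widehat{f} = (\widehat{f}\, w)\cdot(1/w)$ by Cauchy--Schwarz gives continuity of point evaluations and pointwise Fourier inversion, and inserting $w^2/w^2$ turns the inversion integral into $\langle f, \mathcal{F}^{-1}(e^{-2\pi i s x}/w^2)\rangle_{\mathcal{H}_s}$. The only minor difference is that the paper does this computation for $\delta_0$ and then passes to $\delta_x$ via translation invariance of the $\mathcal{H}_s$ inner product, whereas you keep the character $e^{2\pi i s x}$ in the integral and identify $e^{-2\pi i s x}/w^2 = \widehat{T_x\varphi}$ directly; this gives $k_x = T_x\varphi$ exactly, so your ``up to the sign convention'' hedge is unnecessary.
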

\begin{proof}
Since $\HFHsRd$ is continuously embedded into $\HFCORdN$ it is clear that the family
of point measures acts   {\it uniformly boundedly} on $\HFHsRd$. Hence we only
have to prove the explicit representation of $\delta_0$ on $\HFHsRd$ and then
(using the definitions) the covariance of the situation: shifting
the point evaluation ($\delta_0$ to $\delta_x$) corresponds to shifting the generator representing $\varphi$. We get the representation using the Fourier inversion formula. Note  that we do not need that $f$ itself is in $\HFLiRd$, since the inverse Fourier transform a priori defined as an $\HFLtsp$-FT has the usual form as integral  if $\hat f \in \HFLiRd$, which is true for any
$f \in \HFHsRd$, if $s > d/2$:
\begin{equation}\label{Hs-delta0rep}
    f(0) =  \HFintRd  \hat f (s) ds =  \HFintRd  {\hat f}(s) w(s) \cdot \frac{1}{w^2}(s) w(s) ds =: \HFintRd \hat f(s)  \, \overline{ \hat \varphi(s)} \, \,  w^2(s) ds,
\end{equation}
if we put $\varphi = \HFIFT(1/{w^2})$.

As a matter of fact we have $\HFjapy^r \in \HFSORd$
if (and only if) $r < -d/2$.

Now we can apply the shift invariance of the scalar product (exercise):
\begin{equation}\label{HS-scalshift}
   \langle T_x f, T_x h \rangle_\HFHs =   \langle f, h \rangle_\HFHs \quad \mbox{for all} \quad
     f, h \in \HFHsRd,
\end{equation}
because we know that translation goes to modulation on the Fourier transform side, but having the same modulations within a scalar products means (due to the fact that one is taking a conjugation in the scalar product) that it is unchanged. Technically speaking one could argue that translation goes into modulation, but for {\it all weights} modulations are unitary on the corresponding weighted $\HFLtsp$-spaces.
\end{proof}
Probably some more details can be found in the paper on robustness (\cite{fewe04}), where it is shown that  the reproducing kernels depend in a continuous way on the weight, e.g., on the parameter $s$ for the case of the weights $w = w_s$ on $\HFRdst$. Again Wiener amalgam methods are used in order
to describe the continuous dependence of minimal norm interpolation operators
from both the degree of smoothness $s$ and the lattice chosen. Since a change
of lattice constants cannot be controlled by an operator norm error estimate
of the corresponding operators  (but only continuous dependence in the strong
operator topology) such estimates are more delicate.

Different choices of weights have been treated in the (recent) PhD thesis of Rosa Aceska, treating the case of {\it variable smoothness} (\cite{ac09}).




\section{Wavelets, Gabor Expansions and Coorbit Theory}

Wiener amalgam spaces over general locally compact groups play
an important role also in the theory of {\it coorbit spaces} as
developed in a series of papers by the author with K.~Gr\"ochenig
in the late 80th, see \cite{fegr88}, \cite{fegr89}, \cite{fegr89-1}
and \cite{gr91}.
This was already indicated in  the highly cited 
survey article \cite{hewa89} which mentions Wiener amalgams as important tools in the context of wavelet theory and Gabor Analysis.

In fact, this is not surprising, since the starting point of {\it coorbit
theory} is the use of a generalized {\it wavelet transform}, named a
{\it voice transform}, defined on a locally compact group $\HFcG$, acting on a
Hilbert space (typically $\HFLtRdN$) by some unitary, irreducible representation.
The key results of {\it coorbit space theory} describe the following scenario:
\begin{enumerate}
  \item Using the continuous {\it voice transform} $f \mapsto \HFVgf$ one obtains an isometric    identification of the Hilbert space $\HFHilb$ with its range under this transform,  a closed, left translation invariant   Banach space of continuous functions in $\HFLtcGN$.
  \item Assuming that one has a good fiducial vector (Gabor atom, mother
  wavelet) one can select from a larger reservoir (say the space
  $\HFScPRd$ of tempered distributions) those elements which belong to
  a given translation-invariant solid BF-space, say a weighted $\HFLpsp$-space
  over the acting group $\HFcG$.
  \item The fact that the identity operator can be described as a convolution
  operator over $\HFcG$ which describes the projection from the generic ambient
  BF-space $\HFYspN$ onto the range of the voice-transform (inside of $\HFYsp$).
  \item Discretization of this reproducing convolution operator allows
  to recover any $f$ from sufficiently dense discrete collections
  of samples $\HFVgf(x_i), i \in I$, and  derive
  {\it atomic decompositions} for the elements of  coorbit spaces
  $\HFCooY$.
\end{enumerate}
For the derivation of all these results Wiener amalgams play an important role. In fact, the choice of ``good windows'' $g \in \HFHilb$ implies that the autocorrelation function $\HFVgg$ is not only integrable, but in fact in
$\HFWCOliG$ or even a weighted version of this space. This in turn implies
good properties of the voice transform, e.g., $\HFVgf \in \HFWCOlt(\HFcG)$ for
$f \in \HFHilb$, due to convolution results for Wiener amalgams. Note that these
convolution relations are very similar to the Euclidean case (which work
 ``componentwise'') for the case of so-called $[IN]$-groups, including the (reduced) Heisenberg group $\HFTFd \times \HFTst$. In contrast, one has to
distinguish between left and right Wiener amalgam spaces on more general groups, such as the ``$ax+b$-group''  of affine transformations of the real line (or $\HFRdst$), which makes estimates more complicated and requires stronger
assumptions on the atoms used (see \cite{fegr89} for technical details). Anyway, let us just mention, that for Gabor Analysis, in particular
for the analysis of {\it regular Gabor families} of the form $\HFFAMglaLa$  consisting of time-frequency shifted version of that Gabor atom $g \in \HFSORd$ (typically), with $\HFVgg \in \HFLiTFd$ (or in fact equivalently $\HFVgg \in \HFWCOli(\HFRtdst)$!) and $\Lambda \lhd \HFTFd$ a discrete lattice of the form
$\Lambda = \AA \ast \HFZtdst$ for some non-singular matrix $\AA$ all the
required estimates can be obtained quite easily using standard results
for Wiener amalgams.

\section{Irregular Sampling and Wiener Amalgam Spaces}

The starting point for the treatment of the so-called {\it irregular
sampling problem} is the classical Shannon Sampling Theorem, which
tells us that a band-limited function $f \in \HFBsp^\Omega \subset \HFLtR$
 (meaning that $\HFspec(f) := \HFsupp(\HFhatf)$ is some given compact set $\Omega$ of the frequency domain) can be recovered from regular samples using the
well-known SINC-function $\HFSINC = \HFIFT({\bf 1}_{[-1/2,1/2]^d})$,
for any sufficiently small value of $\alpha$ (the {\it Nyquist rate}),
in the following form
$$ f(t)  = C_\alpha \HFsumkZd  f(\alpha k) \HFSINC(t-\alpha k),
\quad t \in \HFRdst,$$
for a suitable normalizing factor $C_\alpha > 0$.

Wiener amalgam spaces are also very useful for the study of the
so-called  (principal) {\it shift invariant} spaces,
also called {\it spline-type spaces} by the author
 because the prototypical case are spaces
of splines of any order, such as {\it cubic spline functions} on
the real line\footnote{This author avoids the term ``shift-invariant''
because it is easy to confuse shift-invariance with translation invariance. Furthermore the corresponding spaces depend on both the lattice used and the building block, which could be the cubic B-spline, i.e.\  the convolution product of order $4$ of the standard box-car function ${\bf 1}_{[-1/2,1/2]}$.}.
Such spaces can be characterized by the fact that they have Riesz bases  obtained from a single function and its translates, i.e., a family of the
form $(T_\lambda g)_{\HFlainLa}$. It is not difficult to show that in such a case
the biorthogonal Riesz basis is also generated by some $\HFgd$ in the closed
linear span, or $\HFgd = \HFsumlaLa \HFclam T_\lambda(g)$, with $(\HFclam)_{\HFlainLa}
\in \HFltLam$.

If one assumes that $g \in \HFWLtliRd$, then the sampled convolution products
of the form $ \langle T_\lambda g, g \rangle_\HFHilb = g \ast g\HFcheckm(\lambda)$,
with $\HFlainLa$ and $g \HFcheckm(x) = g(-x)$, determining the quality of Gram-matrix for the given family of translates, belong to $\HFliLam$, because we have
$$  \HFWLtli \ast \HFWLtli \subset \HFWFLili \subset \HFWCOliRd, $$
and consequently the sampling sequence under consideration belongs to $\HFlisp(\Lambda)$. This fact allows to infer (via a classical version
of Wiener's Inversion Theorem) that the biorthogonal Riesz basis is
constituted by translates of an element $\HFgd$  of the closed linear span of the given family, with $\HFlisp$-coefficients.  For example, if $g \in \HFWCOliRd$, then $\HFgd \in \HFWCOliRd$ as well.   The orthogonal projection from
$\HFLtRdN$ onto the closed linear span is thus given by the following
expression
\begin{equation} \label{PgLam02}
 \operatorname{P}_{g,\Lambda}(f) = \HFsumlaLa f \ast g \HFcheckm(\lambda) T_\lambda \HFgd, \quad f \in \HFLtRd,
\end{equation}
with convergence not only in $\HFLtRdN$ but also in $\HFWCOliRdN$, hence
also uniformly.

This situation also has the advantage that $\HFPgLam$ also extends naturally
to an operator on $\HFLpRdN$, for any $p \in [1,\infty]$, with uniform
bounds (a multiple of the norms of $g$, respectively $\HFgd$,  in $\HFWCOliRdN$).


Wiener amalgam spaces have been extremely useful for the mathematical
analysis of iterative schemes for the recovery of band-limited functions
in various function spaces, such as $\HFLpRdN$. The basic strategy
is very often the same: Given the sampling values at a sufficiently
discrete set of points $(x_i)_{i \in I}$ one tries to approximate
$f$ by suitable functions from the same space. This is usually done
with the help of an adapted partition of unity (such as the system
of triangular functions used to create the piecewise linear interpolation
operator from the sampling values of $f$, in the case of the real line),
followed by a projection operator onto the space of band-limited functions,
which in fact may destroy the interpolatory property of the first approximation, but ensures (with a suitable analysis, see \cite{fegr94}) that
the resulting series belongs to the same Banach space where $f$ is taken from,
so that an iterative algorithm can be applied, which is essentially based on a fixed point theorem for Banach spaces.

Among the critical points to mention here is the equivalence of the continuous
$\HFLpsp$-norm with the norm of $\HFWCOlpRd$ on a space of band-limited functions.
This allows us to estimate the approximation error in the first step
of the iterative reconstruction process,  by making use
of the oscillation function (see \cite{fugr07} for extension of these ideas).
The error estimate (jitter error, aliasing error) is also based on considerations involving Wiener amalgam spaces and their convolution relations, see \cite{fegr94}.

It is moreover important to be able to provide convergence rates which depend
only on the band-width of the function which has been sampled, but not on the
choice of the norm, say the $\HFLpsp$-norm, with $1 \leq p \leq \infty$. Since the natural (but naive) multiplication operator with the indicator function
of the spectral domain $\Omega$ (which corresponds to convolution with the
SINC-function in the time domain) requires a restriction to $p \in (1,\infty$)
(because $\HFSINC \in \HFLpR$ only for $1 < p < \infty$), and even then the norms
grow as $p \to 1$ (or to $\infty$) one has to resort to a two-layer process
which is described in all detail e.g.\ in \cite{fegr92-3}: Since there
are now convolution idempotents in $\HFLiRd$ one has to choose some band-limited
function $g \in \HFLiRd$ with $\HFhatg(\omega) \equiv 1$ on $\Omega$ and another
such function $h$ with $ h \ast g = g$. Thus overall one can use the identities
$$   f = f \ast g = f \ast (g \ast h) =  (f\ast h) \ast g $$
for any band-limited function in any of the spaces $\HFLpRd$ for $1 \leq p \leq \infty$. Upon replacing $f \ast h$ by a discrete version of the Radon measure
$f \ast h$ one comes up with an approximation operator whose range is in the
closed linear span of the translates of $g$, which in turn belong to a space
of band-limited functions with a slightly larger spectrum, say
$\Omega' = \Omega + B_r(0)$, for some small value of $r > 0$.

Similar arguments are also used in \cite{fe22-1}, or in the joint papers
with A.~Gumber \cite{fegu21} and \cite{fegu23}.

\section{Spline-type Spaces and Quasi-Interpolation Operators}

It turns out that very similar arguments, also involving Wiener amalgams,
can be performed for the case of so-called spline-type spaces (see \cite{fe02},
or \cite{ro09}), which are usually called (principal) shift-invariant spaces.
Such spaces are typically generate by the translates along some lattice $\Lambda \lhd \HFRdst$ of a given function $\varphi$, typically in $\HFWCOliRd$, which forms a Riesz basic sequence, i.e., a Riesz
basis for its closed linear span. Even for $\varphi \in \HFWLtliRd$ it is
easy to describe this Riesz basis condition, which in turn is equivalent
to the bounded invertibility of the corresponding Gram-matrix. But this
Gram matrix is circulant, with entries generated from the samples of
$\varphi \ast \varphi \HFcheckm$ (flipped version), restricted to $\Lambda$.
With the help of the Hausdorff-Young principle one finds that this
Gram-matrix can be equivalently described by a non-vanishing
condition on the $\Lambda^\perp$-periodized version of $|\widehat{\varphi}|^2$.
But the Hausdorff-Young Theorem for Wiener amalgams implies that
$ \HFFT(\HFWLtliRd) \subset \HFWFLilt$ and thus
$$ |\widehat{\varphi}|^2 \in \HFWFLilt \cdot \HFWFLilt \subset \HFWFLili = \HFSORd $$
by the Cauchy-Schwarz inequality.

\def\HFwphi{\widetilde{\varphi}}

While the shifted $\HFSINC$ functions form a tight frame for the
Hilbert space of band-limited functions in $\HFLtRdN$ and thus the
orthogonal projection operator is simply the convolution by $\HFSINC$,
resp. the operator $f \mapsto \HFIFT({\bf 1}_\Omega \cdot \HFhatf)$,
we need the (unique) {\it biorthogonal Riesz basis} for the case
of spline-type spaces. Again, the key arguments are convolution relations
for Wiener amalgam spaces combined with the classical variant of Wiener's
Inversion Theorem. It states that a function $f$, with a sequence of Fourier coefficients ${\bf c} \in \HFliZ$, that satisfies $f(s) \neq 0$ for any $s \in \HFTst$
has the property that also $s \mapsto 1/f(s)$ is a function with absolutely
convergent Fourier coefficients. Such arguments imply that for any
generator $\varphi \in \HFWCOliRd$ also the biorthogonal family $T_\lambda \HFwphi$, with $\HFlainLa$, is generated by another function $\HFwphi$.
Thus the projection operator  $\HFPgLam$
on such a spline-type space can be described as an operator of the form (we assume for convenience that $\varphi$ and hence $\HFwphi$ are even functions)
$$  \HFPgLam: f \mapsto   \HFsumlaLa  f \ast \HFwphi(\lambda) T_\lambda \varphi. $$
But the standard properties of Wiener amalgam spaces provide us with the
fact that $ f \ast \HFwphi \in \HFLpsp \ast \HFWCOli \subset \HFWLilp \ast \HFWCOli
\subset \HFWCOlp$, and furthermore the convolution of
$ \HFsumlaLa f\ast \HFwphi(\lambda)$ with $\varphi \in \HFWCOli$ is again
convergent in $\HFLpRd$ for $1 \leq p < \infty$, with constants which are independent of $p \in [1,\infty]$.

\section{Reconstruction from Averages and Spline Type Spaces}

The possibility of adapting the iterative reconstruction algorithms for
spaces of band-limited functions to what has been called spline-type function
spaces was observed in two early joint papers with Akram Aldroubi, namely
%
\cite{alfe97}  and
\cite{alfe98}. 
This direction has found big resonance in the community. The survey article
\cite{algr01} has a large number of citations and has become the reference
point for a long list of publications which cannot be mentioned here.

Wiener amalgam spaces also allow to control different types of error
for the irregular sampling problem, be it in the context of band-limited
functions or spline-type spaces. A first systematic exploration of such
a viewpoint is given in the papers \cite{fe92,fe92-3} and the joint papers
with K.~Gr\"ochenig \cite{fegr92,fegr93}.

The results about the control of {\it jitter error}, are a good example
of the power of the method of Wiener amalgams. Results of this type allow
to control the relative error in the reconstruction of say a band-limited
function even if the exact location of the sampling points is not known.
It would be easy to control such an error if the total sum or maybe even
the $\HFltsp$-sum of the error (concerning the location), so to say
the expression $\sum_{i \in I} |x_i - x'_i|^2$ was controlled, but for
applications the more delicate question is: is it enough to know that
$ \sup_{i \in I} |x_i - x'_i| \leq \delta$, where $x_i$ describes the true
(assumed to be valid) and $x'_i$ the actually used sampling positions.

Once one has good uniform control of jitter errors one can go on
and try to reconstruct from local averages, using similar arguments.
Suitable normalized local averages can be viewed as local integrals
over jitter errors and thus often the worst case estimates arising in
the analysis of jitter errors is used in this context.
One of the first papers in this direction was \cite{alfe02}.
A.~Aldroubi has published a couple of papers on this topic, see \cite{al02,alsuta04,alsuta05}.   A.~Aldroubi
has published several papers on this topic, see  \cite{al02,alsuta04,alsuta05}.



\section{Weighted Wiener Amalgam Spaces}

The approach to Wiener amalgams given in \cite{fe83} offers great flexibility
concerning the ingredients to be used compared to the ordinary spaces that have
just local $\HFLpsp$-norms with global $\HFlqsp$-summability. A good early
summary is provided by \cite{he03}.

First of all one can use more general local components locally, even
if one wants to stay with the simple decompositions of measurable
functions into blocks of equal size (supported on $k + Q$, say).
One can use more or less any Banach Function Space (in the sense
of Luxemburg-Zaanen, see \cite{za67},
or using modern terminology as given in
\cite{hosayaya17}, including Lorentz and Orlicz spaces among others.
In the context  of the discussion of compactness on Banach spaces
of functions or distributions and elsewhere the author makes useb
of the notation of {\it solid BF-spaces}, which is almost the same.

Concerning the global behaviour the first step is to use weighted
variants of the sequence spaces, thus making use of spaces of
the form $\HFWsp(\HFLpsp,\HFlqwsp)$. Here one can think of the weight
(on the discrete lattice $\HFZdst$) as being obtained by sampling
a so-called {\it moderate weight function} $w$ (see \cite{fe79} or \cite{gr07}) defined over all of $\HFRdst$ at the lattice points $\HFZdst$.
In this way it can be guaranteed that different descriptions (changing
the lattice or the fundamental domain) still define the same space
with equivalent (natural) norms.

For two moderate weights it is easy to verify that one has
$$ \HFWsp(\HFLsp^p_{\HFnth w_1}, \ell^q_{w_2}) =
 \HFWsp(\HFLpsp \HFnth, \ell^q_{w_1 w_2}), $$
and therefore we recommend choosing the variant on the
right hand side (putting all the weight into the global
component) when it comes to using such spaces.

For negative values of $s$ one can define the corresponding spaces by duality, or equivalently, as inverse Fourier images of the corresponding inversely weighted $\HFLtsp$-spaces in the sense of  tempered distributions.

\section{Wiener Amalgams and Stochastic Analysis}

\def\sigrho{{\sigma_{\negthinspace \rho}}}

Wiener amalgam spaces are not only useful in the context of Time-Frequency
Analysis and Classical Fourier Analysis, but also for a treatment of {\it
wide sense stationary processes}. Although this theory has  been developed in the (natural) context of LCA groups it is also convenient for the Euclidean case  $\HFcG = \HFRdst$.

In the PhD thesis \cite{ho89}, summarized in the paper \cite{feho14},
a theory of {\it generalized stochastic process} (GSPs) has been developed,
based on the properties of $\HFSOG$. GSPs
can be modelled as linear operators  $\rho$ from the Banach algebra $\HFSORdN$
into some abstract Hilbert space $\HFHilb$ (which is defined based on
stochastic considerations). Any such process possesses a Fourier transform
(hence a spectral representation, via the inverse Fourier transform)
and an auto-covariance $\sigrho \in \HFSOPRtd$. {\it Wide sense stationarity}
(and other traditional assumptions concerning GSPs) can be expressed by
properties of $\sigrho$, here by invariance under translation along
the main diagonal $\Delta = \{ (x,x), x \in \HFcG \}$. The autocorrelation
of the spectral process $\widehat{\rho} :=  \rho \circ \HFFT $ is just the
$2d$-dimensional Fourier transform of $\sigrho$, or in short
$ \HFFT(\sigrho) =  \sigma_{\HFFT(\rho)}$. Translation invariance of $\sigrho$
is equivalent to a support condition (concentration on the orthogonal
subgroup to $\Delta$). The positive semi-definiteness then in fact
implies that there is a (non-negative) translation bounded measure
on $\HFcG$ completely characterizing the autocorrelation $\sigrho$ of the
GSP $\rho$.

Connections to Brownian motion were described in
\cite{beoh11}.

\section{Quasi-Crystals and Transformable Measures}

A theory of {\it transformable Radon measures}  has been developed by
L.~Argabright and J.~Gil-de-Lamadrid between 1971 and 1974: see \cite{argi71}
for a survey article and \cite{argi74} 
for a comprehensive discussion of this approach, see also \cite{musz02}.

The (more complicated) definition given in these papers can be rephrased
with the help of Wiener amalgams as follows. We recall that the space $\HFWMlinf$
of {\it translation-bounded} (Radon) {\it measures}, which also appears in  \cite{roth84}, is the dual space of the Wiener algebra $\HFWCOli$. In the context of mild distributions, i.e., of $\HFSOPRd = \HFWFLinlin$,  the set
of {\it transformable measures} can be described 
in the following way: 
\begin{lemma} \label{transfmeas}
$${\mathcal M}_T \HFnth (\HFRdst) = \{ \HFsigSOP \HFsuth \HFhatsi \in \HFWMlinfRd, \sigma \in \HFRaRd. \}. $$
\end{lemma}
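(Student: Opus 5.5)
We have to show that the Argabright--Gil-de-Lamadrid class ${\mathcal M}_T(\HFRdst)$ of transformable measures coincides with the set of mild distributions $\sigma$ that are Radon measures and have translation-bounded Fourier transform. Recall their definition: a Radon measure $\mu$ is \emph{transformable} if there is a Radon measure $\widehat\mu$ on $\HFRdsth$ such that $\widehat{h\ast\widetilde h}\in\Lisp(|\widehat\mu|)$ and $\mu(h\ast\widetilde h)=\widehat\mu(|\widehat h|^2)$ for all $h\in\HFCcRd$. The plan is to prove the two inclusions separately, working throughout inside $\HFSOPRd=\HFWFLinlinRd$, where the Fourier transform is a weak$^*$-continuous isomorphism.

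\emph{Step 1 (transformable measures are translation bounded and mild).} First invoke the classical result of Argabright and Gil-de-Lamadrid that a transformable measure $\mu$ and its transform $\widehat\mu$ are both translation bounded, i.e.\ lie in $\HFWMlinfRd$. Since $\HFWMlinf=\HFWCOli'\subseteq\HFSOPsp$, because $\HFSOsp\hookrightarrow\HFWCOli$ densely, $\mu$ defines a mild distribution $\sigma$. Next identify $\widehat\mu$ with the distributional transform $\HFhatsi$. Test the defining identity against $h\ast\widetilde h$ with $h\in\HFSORd$. The products $h\ast\widetilde h$, and after polarization $h_1\ast\widetilde h_2$, span a dense subspace of $\HFSORd$, since $\HFSOsp\ast\HFSOsp$ is dense. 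Both sides are continuous on $\HFSOsp$ because $\mu,\widehat\mu\in\HFWMlinf$. Hence $\HFhatsi=\widehat\mu\in\HFWMlinfRd$, which gives the inclusion ``$\subseteq$''.

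\emph{Step 2 (the converse).} Let $\sigma\in\HFSOPRd\cap\HFRaRd$ with $\HFhatsi\in\HFWMlinfRd$. For $h\in\HFCcRd$ set $\phi=h\ast\widetilde h$; then $\widehat\phi=|\widehat h|^2$. To check that $|\widehat h|^2$ is integrable against $|\HFhatsi|$ and that $\sigma(\phi)=\HFhatsi(|\widehat h|^2)$, approximate $h$ by $h_n=h\cdot\chi$-smoothings, i.e.\ mollifications $h\ast\psi_n$ with $\psi_n\in\HFSORd$ a compactly supported approximate unit. Then $h_n\in\HFSORd\cap\HFCcRd$ and $\sigma(h_n\ast\widetilde h_n)=\HFhatsi(|\widehat{h_n}|^2)$ by the definition of the Fourier transform on $\HFSOPsp$. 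The left-hand side converges to $\sigma(\phi)$, because the supports stay in a fixed compact set, the convergence is uniform, and $\sigma$ is a Radon measure. On the right, $|\widehat{h_n}|^2=|\widehat h|^2|\widehat{\psi_n}|^2$ increases, or at least converges boundedly and pointwise, to $|\widehat h|^2$.

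\emph{Main obstacle.} The key difficulty is integrability of $|\widehat h|^2$ with respect to the translation-bounded measure $\HFhatsi$ when $h$ is merely continuous with compact support. By Hausdorff--Young for amalgams, $h\in\HFCcRd\subseteq\HFWLtliRd$ gives $\widehat h\in\HFWFLilt$, so $|\widehat h|^2\in\HFWFLili=\HFSORd\subseteq\HFWCOliRd$ by Cauchy--Schwarz, exactly as in the spline-type section. Since $\HFWCOli$ pairs with $\HFWMlinf$, this yields $|\widehat h|^2\in\Lisp(|\HFhatsi|)$. Moreover, $h_n\to h$ in $\HFWLtli$ gives convergence of $|\widehat{h_n}|^2$ in $\HFSOsp$, which justifies passing to the limit on the right-hand side. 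Hence $\sigma$ is transformable with transform $\HFhatsi$, giving ``$\supseteq$''. Positive-definiteness issues do not arise because the Argabright--Gil-de-Lamadrid definition only uses the test functions $h\ast\widetilde h$, which we handled directly.
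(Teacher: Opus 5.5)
The paper gives no proof of this lemma; it presents it as a rephrasing of the Argabright--Gil-de-Lamadrid definition. Your argument therefore has to stand on its own, and Step~1 does not.

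\textbf{The gap in Step 1.} Argabright and Gil-de-Lamadrid show that the \emph{transform} $\widehat\mu$ of a transformable measure is translation bounded, but $\mu$ itself need not be. For example, take $\psi\in C_c^\infty(\HFRst)$ supported in $[0,1]$ and $f(x)=\sum_{n\geq 1} n\,\psi(x-n)\,e^{i\pi n^6 (x-n)^2}$. The mass of $|f|$ on $[n,n+1]$ is $n\|\psi\|_1$, which is unbounded. On the other hand, the chirp $e^{i\pi c x^2}$ has a Fourier transform of modulus $c^{-1/2}$, so the $n$-th term has Fourier transform bounded by $n^{-2}\|\widehat\psi\|_1$. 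Hence $\widehat f$ is bounded and continuous, and $f\,dx$ is transformable without lying in $\HFWMlinf$. This is exactly why the right-hand side of the lemma only asks for $\sigma\in\HFRaRd$, and why Lemma~\ref{doubletransfr} lists translation boundedness of $\mu$ as the extra condition for double transformability. Your two uses of $\mu\in\HFWMlinf$ are therefore unjustified: you need it to regard $\mu$ as a mild distribution at all, and to have ``both sides continuous on $\HFSOsp$''. Note also that the defining identity is only available for $h\in\HFCcRd$, not for $h\in\HFSORd$.

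\textbf{How to repair it.} Derive the continuity of $\mu$ from that of $\widehat\mu$.
Let $K_2$ be the span of $\HFCcRd\ast\HFCcRd$; it lies in $\HFSORd$ because $\HFWLtli\ast\HFWLtli\subseteq\HFSOsp$. By the polarized defining identity, $\mu(g)$ is $\widehat\mu$ applied to the Fourier transform of $g$ (up to a reflection). That transform belongs to $\HFSOsp\subseteq\HFWCOli$, so $|\mu(g)|\leq C\,\|\widehat\mu\|_{\HFWMlinf}\,\|g\|_{\HFSOsp}$ for all $g\in K_2$.
Now take $f\in\HFSORd\cap\HFCcRd$ and a compactly supported approximate unit $(\rho_\epsilon)$. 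Then $f\ast\rho_\epsilon\in K_2$, and $f\ast\rho_\epsilon\to f$ both in $\HFSOsp$ and uniformly with supports in a fixed compact set. Hence $\mu$ is $\HFSOsp$-continuous on $\HFSORd\cap\HFCcRd$ and extends uniquely to some $\sigma\in\HFSOPRd$. This $\sigma$ agrees with $\mu$ there, so $\sigma\in\HFRaRd$. Finally $\HFhatsi=\widehat\mu$, because both are continuous on $\HFSOsp$ and agree on the dense set of Fourier transforms of elements of $K_2$.

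\textbf{Step 2.} This step is correct, but the mollification is superfluous. For $h\in\HFCcRd$ the function $h\ast\widetilde h$ already lies in $\HFSORd\cap\HFCcRd$. So $\sigma(h\ast\widetilde h)=\HFhatsi(|\widehat h|^2)$ follows directly from the definition of the Fourier transform on $\HFSOPsp$. The integrability of $|\widehat h|^2\in\HFSOsp\subseteq\HFWCOli$ against $|\HFhatsi|$ is then exactly your pairing argument.
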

Here the membership of $\sigma$ in the class $\HFRaRd$ of Radon measures on $\HFRdst$ corresponds to the assumption that one has in addition to the norm
continuity of $\sigma$ on $\HFSORdN$ (or equivalently, on the dense subspace
$\HFSORd \cap \HFCcRd$) also the following family of estimates:
For any compact set $K \subset \HFRdst$ there exists some constant $C_K > 0$ such that $$ |\sigma(f)| \leq C_K \HFlinfnorm{f},  \quad f \in \HFSORd \cap \HFCcRd. $$

A recent survey article describing the usefulness of translation-bounded measures and mild distributions for the mathematical theory of quasi-crystals is given in
\cite{feriscst24}  (under preparation). It contains among others a proof
of a characterization of all the transformable measures which have transformable Fourier transform in the sense of Argabright and Lamadrid. This
characterization was already claimed soon after the publication of \cite{fe81-2}:
\begin{lemma}\label{doubletransfr}
A Radon measure $\mu$ has a transformable Radon measure $\widehat{\mu}$ if and
only if $\mu \in \HFWMlinf$ (i.e., if it is translation bounded) and has,
in the sense of $\HFSOPRd$,  a Fourier transform which is translation-bounded
as well.
\end{lemma}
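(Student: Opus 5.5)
We prove the two implications separately, using Lemma~\ref{transfmeas} as the working definition of transformability: a Radon measure $\mu$ is transformable iff $\mu\in\HFSOPRd$ and $\widehat\mu\in\HFWMlinfRd$.

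\textbf{Necessity.} Suppose $\mu$ is transformable with Fourier transform $\widehat\mu$, and $\widehat\mu$ is itself a transformable Radon measure. Applying Lemma~\ref{transfmeas} to $\mu$ gives that $\widehat\mu$ is translation bounded. Applying it to $\widehat\mu$ gives that $\HFFT(\widehat\mu)$ is translation bounded. In $\HFSOPRd$ we have $\HFFT(\widehat\mu)=\mu^{\checkmark}$. The flip $x\mapsto -x$ preserves $\HFWMlinf$, since it only reflects the BUPU. Hence $\mu\in\HFWMlinf$, and its Fourier transform is translation bounded, as claimed.

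\textbf{Sufficiency.} Suppose $\mu\in\HFWMlinf$ and $\widehat\mu\in\HFWMlinf$, the latter in the sense of $\HFSOPRd$.
\begin{enumerate}
\item First, $\mu\in\HFSOPRd$. This follows from $\HFWMlinf=\HFWCOli'$ together with the continuous embedding $\HFSORd\hookrightarrow\HFWCOliRd$, so that by duality $\HFWMlinf\hookrightarrow\HFSOPRd=\HFWFLinlin$.
\item Since $\widehat\mu\in\HFWMlinf$, Lemma~\ref{transfmeas} shows that $\mu$ is transformable.
\item It remains to show that $\widehat\mu$ is transformable. It is a Radon measure, because translation-bounded measures are Radon measures: on each compact $K$, covered by finitely many BUPU supports, one has $|\widehat\mu(f)|\le C_K\HFlinfnorm f$. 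By step 1 applied to $\widehat\mu$, we also have $\widehat\mu\in\HFSOPRd$. Its Fourier transform is $\mu^{\checkmark}\in\HFWMlinf$.
\item Applying Lemma~\ref{transfmeas} once more, now to $\widehat\mu$, shows that $\widehat\mu$ is transformable.
\end{enumerate}

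\textbf{Main obstacle.} The delicate point is that the Argabright--Gil de Lamadrid Fourier transform of a transformable measure coincides with its Fourier transform in $\HFSOPRd$. Lemma~\ref{transfmeas} presupposes this compatibility, and the necessity direction uses it again when identifying $\HFFT(\widehat\mu)$ with $\mu^{\checkmark}$. To settle it, I would test both transforms against $f\in\HFSORd\cap\HFCcRd$, where the classical defining identity $\int \widehat f\,d\mu=\int f\,d\widehat\mu$ holds. Then I would use density of this subspace in $\HFSORd$ together with the $w^*$-continuity of $\HFFT$ on $\HFSOPRd$, so that equality on the dense subspace extends to all of $\HFSORd$.
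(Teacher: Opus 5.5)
The paper gives no proof of Lemma~\ref{doubletransfr}. It only announces that one will appear in \cite{feriscst24}, so there is no argument of the paper's to compare yours with. Placed right after Lemma~\ref{transfmeas}, which is likewise stated without proof, the statement invites exactly your reduction, and that reduction is correct: apply Lemma~\ref{transfmeas} once to $\mu$ and once to $\widehat{\mu}$, then close the loop using that $\mathcal{F}^2\sigma$ is the reflection of $\sigma$ on $\HFSOPsp$ and that $\HFWMlinf$ is reflection invariant. One small precision in step~1: the restriction map $\HFWMlinf = \HFWCOli' \to \HFSOPsp$ is injective because $\HFSOsp$ is dense in $\HFWCOli$. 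A continuous embedding alone would not give this.

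You rightly single out the compatibility of the Argabright--Gil de Lamadrid (AG) transform with the $\HFSOPsp$-Fourier transform as the real issue. However, your sketch of it does not work as written, for two reasons.

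First, the AG identity is $\int f\,d\mu = \int \check{f}\,d\widehat{\mu}$ with $\check{f} = \mathcal{F}^{-1}f$ and $f \in K_2 = \mathrm{span}(C_c \ast C_c)$, so the compact support sits on the side of $\mu$. Your version, with $f$ compactly supported on the side of $\widehat{\mu}$ and $\widehat{f}$ integrated against $\mu$, is not part of the definition. For a $\mu$ not known to be translation bounded, which is exactly the situation in the necessity direction, it is not even clear that $\widehat{f}$ is $|\mu|$-integrable. What you can compare is $\mathcal{F}\mu \in \HFSOPsp$ and the AG transform on $\mathcal{F}^{-1}(K_2)$. This set is norm dense in $\HFSOsp$ because $K_2$ is: each $f \in \HFSOsp \cap C_c$ is the $\HFSOsp$-limit of $f \ast \delta_n \in C_c \ast C_c$.

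Second, extending an equality from a dense subspace requires both functionals to be norm continuous on $\HFSOsp$; $w^*$-continuity of $\mathcal{F}$ plays no role. So you must know beforehand that the AG transform lies in $\HFSOPsp$. You cannot get this from Lemma~\ref{transfmeas}, because your necessity step only yields translation boundedness of the AG transform once compatibility is known, which would be circular. There are two ways out:
\begin{enumerate}
\item Import the classical AG theorem that the transform of a transformable measure is translation bounded.
\item Bypass it. The translation-bounded measure $\nu = \mathcal{F}\mu$ supplied by Lemma~\ref{transfmeas} satisfies $\int \check{f}\,d\nu = \nu(\check{f}) = \mu(f) = \int f\,d\mu$ for all $f \in K_2$, since $\check{f} \in \HFSOsp \subset \HFWCOli$. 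By uniqueness of the AG transform, $\nu$ is the AG transform of $\mu$.
\end{enumerate}
With either repair your argument is complete.
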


\section{Mild Distributions as Multipliers}

We conclude this paper with a result which has not been presented
in this form in the literature. It gives an indication of the fact
that the study of multipliers even between ``ordinary Wiener amalgams''
naturally also leads to the appearance of space of distributions,
in the given case of mild distributions (see \cite{fe19,fe20-1}),
i.e.\ the dual space
of the Segal algebra $\HFSORd = \HFWFLili$, namely $\HFSOPRd = \HFWFLinlinRd$.

\begin{theorem} \label{WWPmult}
The space of multipliers $\HFHRdsp(\HFWCOlisp,\HFWMlin)$ can be naturally identified
with the dual space of $\HFSORdN$, i.e.,  with  $\HFSOPsp = \HFWFlinlin$,  
with equivalence of their natural Banach spaces norms.
\end{theorem}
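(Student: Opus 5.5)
The natural reading is that $\HFHRdsp(\HFWCOlisp,\HFWMlin)$ is the space of bounded linear operators $T\colon \HFWCOliRd \to \HFWMlinfRd$ commuting with translations. The plan is to realize both directions through the convolution bilinear map $(f,g)\mapsto f\ast g$ on $\HFWCOliRd\times\HFWCOliRd$, and to show that it has range exactly $\HFSORd$, with $\HFSORd$ carrying the quotient norm of the projective tensor product. Recall that $\HFWMlinfRd = \HFWCOliRd'$, so $\langle Tf,g\rangle$ is meaningful for $f,g\in \HFWCOliRd$.

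For the easy direction, start from $\sigma\in\HFSOPRd$ and define $T_\sigma f := \sigma\ast f$ as a functional on $\HFWCOliRd$ by setting $\langle \sigma\ast f, g\rangle := \sigma(f\HFcheckm\ast g)$. The step needed is the convolution estimate
$$\HFWCOlisp\ast\HFWCOlisp\subset \HFWLtli\ast\HFWLtli\subset \HFWFLili=\HFSOsp,$$
with norm control. The first inclusion is trivial, and the second was already quoted in the section on spline-type spaces, where it came from Hausdorff--Young for amalgams plus Cauchy--Schwarz. This gives
$$\|\sigma\ast f\|_{\HFWMlin}\le C\,\HFSOPsi\,\|f\|_{\HFWCOlisp}.$$
Translation invariance of $T_\sigma$ is immediate, and injectivity of $\sigma\mapsto T_\sigma$ follows from the fact that convolutions $f\ast g$ span a dense subspace of $\HFSORd$ (for instance, they contain $\HFScRd\ast\HFScRd$).

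For the converse, given $T$, consider the translation-invariant bounded bilinear form $B(f,g)=\langle Tf,g\HFcheckm\rangle$. The goal is a functional $\sigma$ with $B(f,g)=\sigma(f\ast g)$. This rests on two ingredients.

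\emph{Factorization.} Every $h\in\HFSORd$ admits a representation $h=\sum_n f_n\ast g_n$ with $\sum_n\|f_n\|_{\HFWCOlisp}\|g_n\|_{\HFWCOlisp}\le C\|h\|_{\HFSOsp}$. I would obtain this from the atomic characterization of $\HFSORd$: each atom is a time-frequency shift $M_\omega T_x g_0$ of a Gaussian $g_0$. Writing $g_0=g_1\ast g_1$ with Gaussians $g_1$, one has $M_\omega T_x g_0 = (M_\omega T_x g_1)\ast(M_\omega g_1)$, and the $\HFWCOlisp$-norms of both factors are uniformly bounded in $(x,\omega)$, because $\HFWCOlisp$ is invariant under modulations and under translations.

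\emph{Well-definedness.} One must show that $\sum_n f_n\ast g_n=0$ implies $\sum_n B(f_n,g_n)=0$. I would first show that $T$ commutes with convolution by elements $k\in\HFWCOliRd$, that is, $T(f\ast k)=Tf\ast k$, by writing $f\ast k=\int k(y)T_yf\,dy$ as a Bochner integral in $\HFWCOliRd$. Together with translation invariance this gives $B(f\ast k,g)=B(f,g\ast k)=B(k,f\ast g)$ for smooth $k$, e.g.\ Gaussians $k$. Hence $\sum_n B(f_n\ast k,g_n)=B\bigl(k,\sum_n f_n\ast g_n\bigr)=0$. Letting $k$ run through a Gaussian approximate identity then yields $\sum_n B(f_n,g_n)=0$.

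This limiting step is where I expect the main obstacle. Although $k_\alpha\ast f\to f$ in $\HFWCOliRd$ for each fixed $f$, the series over $n$ requires a dominated, uniform control, and the $\HFWCOlisp$-norms of $k_\alpha$ blow up, so the limit cannot be passed through norms of $k_\alpha$. I would handle it by bounding $\|f_n\ast k_\alpha\|_{\HFWCOlisp}\le \|k_\alpha\|_{\Lisp}\|f_n\|_{\HFWCOlisp}$ and using dominated convergence for the series. Once this is done, $\sigma(h):=\sum_n B(f_n,g_n)$ satisfies $|\sigma(h)|\le C\|T\|\,\|h\|_{\HFSOsp}$, so $\sigma\in\HFSOPRd$ and $T=T_\sigma$. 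Together with the estimate from the first direction, this gives the claimed identification with equivalent norms.
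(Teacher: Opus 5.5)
Your proposal is correct, but for the substantial direction it takes a different route from the paper.

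\textbf{How the paper argues.} The paper uses the chain $\HFSOsp \hookrightarrow \HFWCOlisp \hookrightarrow \HFWMlin \hookrightarrow \HFSOPsp$ to view a multiplier $T$ as a translation-invariant operator from $\HFSORd$ to $\HFSOPRd$. It then quotes an external result (Theorem 6 of \cite{fe23-2}) saying that such operators are convolutions with a unique $\sigma\in\HFSOPRd$. The factorization $\HFWCOli \HFprojcvh \HFWCOli = \HFSOsp$ appears only in the easy direction. There it is combined with $\HFSOsp\ast\HFSOPsp\subseteq\HFCbsp$ and the argument of \cite{fe17} that a function whose convolution maps $\HFWCOliRd$ into $\HFCbsp$ lies in $\HFWMlinfRd$.

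\textbf{How you argue.} You do the easy direction directly by duality, via $g\mapsto\sigma(f\HFcheckm\ast g)$ and $\HFWCOliRd'=\HFWMlinfRd$. This needs only the inclusion $\HFWCOlisp\ast\HFWCOlisp\subseteq\HFSOsp$. You then use the factorization for the hard direction: building $\sigma$ from the bilinear form $B$ amounts to identifying the multipliers with the dual of the $\Lisp$-module tensor product of $\HFWCOliRd$ with itself. Your well-definedness step is sound. It rests on $T(f\ast k)=Tf\ast k$, the symmetry $B(f\ast k,g)=B(f,g\ast k)=B(k,f\ast g)$, and dominated convergence. The worry you raise resolves exactly as you say, through the bound $\|f_n\ast k_\alpha\|_{\HFWCOlisp}\le C\|k_\alpha\|_1\|f_n\|_{\HFWCOlisp}$.

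\textbf{What each route buys.} Your proof is self-contained and gives the estimate $\HFSOPnorm{\sigma}\le C\|T\|$ explicitly; the paper only says this ``follows the same lines.'' In effect you reprove, in this special setting, the characterization the paper cites. The price is that you need the atomic (Gaussian) characterization of $\HFSORd$ to obtain the factorization. The paper instead relies on the general theorem about translation-invariant operators from $\HFSORd$ to $\HFSOPRd$.

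\textbf{A bookkeeping point.} With your conventions, $\langle T_\sigma f, g\HFcheckm\rangle=\sigma\bigl((f\ast g)\HFcheckm\bigr)$. So the functional you construct is the flipped version of the one representing $T$. This is harmless, since $\HFSORd$ is invariant under the flip.
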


\begin{proof}
First we observe the obvious chain of inclusions, namely
$$  \HFSOsp \HFhkr \HFWCOli \HFhkr \HFWMlin \HFhkr \HFWFLinlin = \HFSOPsp $$
and thus any operator $T$ from $\HFWCOlisp$ into $\HFWMlin$ which
commutes with translation is obviously also an operator
from $\HFSORd$ into $\HFSOPsp$ and thus by
\cite{fe23-2}, Theorem 6,   a convolution
operator by a uniquely determined element $\sigma \in \HFSOPsp$,
meaning that
$$ Tf(x) = \sigma(T_x \HFfchk), \quad f \in \HFSOsp, x \in \HFRdst. $$

Using the obvious consequence of Plancherel's theorem that
$\HFLtsp \ast \HFLtsp = \HFFLisp$ the convolution relation for Wiener amalgams
gives
$$  \HFWCOlisp \ast \HFWCOlisp \subseteq \HFWLtli \ast \HFWLtli \subseteq
 \HFWFLili $$
and the corresponding norm estimates: for some universal $C > 0$
one has
$$  \HFsupnorm{\sigma \ast (f \ast g)} \leq
 C_1 \HFSOPnorm{\sigma} \HFSOnorm{f \ast g}
\leq C \HFSOPsi  \HFnormta {f} {\HFWCOli} \HFnormta {g} {\HFWCOli}. $$

Let us now look at the converse. Given $\sigma \in \HFSOPsp$
we have to verify that $Tf = \sigma \ast f$  also defines a bounded linear
mapping from $\HFWCOli$ into $\HFWMlin$. So let us fix $\sigma$.
Let us first recall that one has automatically
$$ \HFSOsp \ast \HFSOPsp \subseteq \HFCbsp$$
together with the corresponding norm estimate
(based on the convolution tensor product property for $\HFSORd$)
$$ \HFsupnorm{\sigma \ast f} \leq C_1 \HFSOnorm{f} \HFSOPsi. $$

In fact we have by the minimality property of
$\HFSOsp = \HFWFLili$ (see \cite{fe81-2}):
  $$ \HFWCOli \HFprojcvh \HFWCOli = \HFSOsp. $$

But using the associativity of convolution we have then
that $\sigma \ast f$ defines a convolution operator
from $\HFWCOli$ into $\HFCbRd$ (with the sup-norm):
$$ T: g \mapsto  (\sigma \ast f) \ast g \in \HFCbsp, \quad  g \in \HFWCOli. $$

In this case it is easy to verify (see \cite{fe17} for the
detailed argument) that
$$ \sigma \ast f  \in \HFWCOli'= \HFWMlinf, $$
as we have claimed. The verification of the norm equivalence follows the same lines.
\end{proof}

\begin{remark}
Observing that $\HFWCOli$ is a Segal algebra, hence an essential
module over the Banach convolution algebra $\HFLiRdN$ which has
of course bounded approximate units, we note that we can apply
the Cohen-Hewitt Factorization Theorem which gives
$  \HFWCOli = \Lisp \ast \HFWCOli$. Consequently a multiplier, which
obviously commutes with convolutions and  maps $\HFWCOli$ into $\HFWMlinf$,
has to map $\HFWCOli$ actually into $\Lisp \ast \HFWMlinf \subseteq \HFWLilinf$.
Thus this change from the space $\HFWMlin$ of translation-bounded measures to the
space of locally integrable functions $\HFWLilin$  as target space would not change the result.
\end{remark}

By the same argument, but using
$ \HFWLtli \HFprojcvh \HFWLtli = \HFSOsp, $
we obtain:
\begin{corollary} \label{WLtPmult}
The space of multipliers $\HFHRdsp(\HFWLtli,\HFWLtlin)$ can be naturally
identified with $\HFSOPsp$.
\end{corollary}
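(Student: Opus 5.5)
We follow the proof of Theorem~\ref{WWPmult} step by step, replacing the pair $\HFWCOli$, $\HFWMlin$ by $\HFWLtli$, $\HFWLtlin$. The two inputs are the duality $\HFWLtli' = \HFWLtlin$ from Theorem~\ref{WBCduality} and the projective tensor identity $\HFWLtli \HFprojcvh \HFWLtli = \HFSOsp$.

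\emph{Step 1 (multiplier to distribution).} We start from the chain of inclusions
$$ \HFSOsp \HFhkr \HFWCOli \HFhkr \HFWLtli \HFhkr \HFWLtlin \HFhkr \HFWMlin \HFhkr \HFSOPsp .$$
Let $T$ be a bounded operator from $\HFWLtli$ into $\HFWLtlin$ that commutes with translations. Restricted to $\HFSOsp$, it is a translation invariant operator from $\HFSOsp$ into $\HFSOPsp$. By \cite{fe23-2}, Theorem 6, there is a unique $\sigma \in \HFSOPsp$ with $Tf = \sigma \ast f$ for $f \in \HFSOsp$.

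To bound $\HFSOPsi$, we test against products $f \ast g$ with $f,g \in \HFWLtli$. The duality pairing $\langle Tf, g\HFcheckm \rangle$ is controlled by $\|T\| \, \HFnormta{f}{\HFWLtli} \HFnormta{g}{\HFWLtli}$. By the identity $\HFWLtli \HFprojcvh \HFWLtli = \HFSOsp$, the elements $f \ast g$ generate $\HFSOsp$ with the projective norm. This gives $\HFSOPsi \leq C \|T\|$.

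\emph{Step 2 (distribution to multiplier).} Fix $\sigma \in \HFSOPsp$ and $f \in \HFWLtli$. For $g \in \HFWLtli$ we have $f \ast g \in \HFSOsp$, with norm bounded by $\HFnormta{f}{\HFWLtli}\HFnormta{g}{\HFWLtli}$. Hence
$$ \HFsupnorm{\sigma \ast (f\ast g)} \leq C_1 \HFSOPsi \HFnormta{f}{\HFWLtli}\HFnormta{g}{\HFWLtli}. $$
To give $\sigma \ast f$ a meaning as a function, we approximate $f$ by elements of $\HFSOsp$, which is dense in $\HFWLtli$; for those elements $\sigma \ast f$ is in $\HFCbsp$. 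Associativity $(\sigma\ast f)\ast g = \sigma \ast (f \ast g)$ then shows that $g \mapsto ((\sigma\ast f)\ast g)(0)$ is a bounded linear functional on $\HFWLtli$. By Theorem~\ref{WBCduality} it is represented by an element of $\HFWLtlin$, with norm at most $C_1 \HFSOPsi \HFnormta{f}{\HFWLtli}$. A density argument extends this estimate to all $f \in \HFWLtli$, and we identify the limit with $\sigma \ast f$ in $\HFSOPsp$.

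\emph{Main obstacle.} The key input is the identity $\HFWLtli \HFprojcvh \HFWLtli = \HFSOsp$, in particular the surjectivity half. The inclusion $\HFWLtli \ast \HFWLtli \subseteq \HFWFLili$ is the easy part: it follows from Plancherel ($\HFLtsp \ast \HFLtsp = \HFFLisp$ locally) together with the $\HFlisp$-convolution of the global components.

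For surjectivity we would use the minimality of $\HFSOsp$ from \cite{fe81-2}. Since $\HFWCOli \HFhkr \HFWLtli$ and $\HFWCOli \HFprojcvh \HFWCOli = \HFSOsp$, the range of the projective tensor product already contains $\HFSOsp$. Combined with the easy inclusion, this gives the identity.

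A second, minor point is checking that the norm of $\HFWLtlin$ obtained by duality agrees, up to equivalence, with the BUPU norm. Theorem~\ref{WBCduality} supplies this, since $\HFScRd$ is dense in $\HFLtsp$ and in $\HFlisp$.
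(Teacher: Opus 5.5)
Your proposal is correct and follows the paper's route. The paper obtains the corollary simply by rerunning the proof of Theorem~\ref{WWPmult} with $\HFWLtli \HFprojcvh \HFWLtli = \HFSOsp$ in place of $\HFWCOli \HFprojcvh \HFWCOli = \HFSOsp$, and your derivation of the first identity from the second (via $\HFWCOli \HFhkr \HFWLtli$ together with $\HFWLtli \ast \HFWLtli \subseteq \HFWFLili$) is exactly what justifies that substitution. The only thing to tidy up is in Step~1: the identity $(\sigma \ast (f \ast g))(0) = \langle Tf, g\HFcheckm \rangle$ is a priori known only for $f,g \in \HFSOsp$, so you should extend it by density and continuity to all $f,g \in \HFWLtli$ before inserting the factorization $h = \sum_n f_n \ast g_n$.
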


\section{Wiener Amalgams and Pseudo-Differential Operators}

\if 2 < 1
\cite{coni08-1}
\cite{coni08}
\cite{coni10}
\cite{coni09-3}
\cite{codetr19}
\cite{conitr18}
\fi

Wiener amalgams (as well as modulation spaces) also play a role in connection with the theory of pseudo-differential operators, in particular
in connection with the Schr\"odinger equation. For a collection
of papers in this direction,  by E.~Cordero and her coauthors see
\cite{coni08-1, coni08, coni10, coni09-3, codetr19, conitr18},
but  also \cite{kato18} or \cite{kikose19}.
For recent contributions see \cite{guzh22} or  \cite{guzh24}.

The use of modulation spaces in the field of pseudo-differential
operators would justify at least another survey article. In fact,
by now modulation spaces have become a standard tool there, starting
with the paper \cite{grhe99}, see also \cite{grhe04} or \cite{begrheok05}.
We refer again to the books  \cite{gr01}, \cite{coro20} and \cite{beok20}.
Let us also mention that Wiener amalgam methods have been crucial
in identifying certain exotic coorbit spaces in
\cite{fepipr22}.

\section{The Banach Gelfand Triple}

Among the unweighted Wiener amalgam spaces there are a few spaces
(aside from $\HFLtRd = \HFWLtlt(\HFRdst)$)
which are invariant under the Fourier transform, namely the
spaces $\HFWFLplp$, also known as modulation spaces $\HFMpRd = \HFMppRd$.
They form a chain of strict inclusions, for $1 < p < q < \infty$:
$$
  \HFSOsp = \HFWFLili = \HFMisp \HFhkr \HFMpRd \HFhkr \HFMqRd \HFhkr \HFMinf = \HFWFLinlin = \HFSOPsp. $$
The Segal algebra $\HFSOGN$ has been introduced already in \cite{fe81-2}
and can be considered as the ``mother of all modulation spaces'', due
to its many characterizations, see \cite{ja18} and good functorial
properties (\cite{lo83-1}). As a Segal algebra it is a Banach ideal
in $\HFLiRdN$, satisfying
$$ \HFSOnorm{g \ast f} \leq \HFlinorm{g} \HFSOnorm{f}, \quad g \in \Lisp, f \in \HFSOsp, $$
and due its Fourier invariance the corresponding estimate holds
true for pointwise multiplication with elements from the Fourier
algebra, i.e.
$$ \HFSOnorm{ h \cdot f}  \leq \HFnormta {h} {\HFFLisp} \HFSOnorm{f},
\quad  h \in \HFFLisp, f \in \HFSOsp. $$
In fact, it has been introduced as the smallest among all
Banach spaces (say inside of $\HFLtRd$) which has this
double module property (in the sense of \cite{brfe83}),
i.e., satisfying these two estimates. Correspondingly
$\HFSOPsp$ is the largest such Banach spaces, even in the
context of tempered distributions.

Together with the Hilbert space  $\HFLtRdN$ these
three Wiener amalgam spaces (but also modulation spaces) satisfy
$$  \HFSORdN \HFhkr \HFLtRdN \HFhkr \HFSOPRdN, $$ and form the
so-called {\it Banach Gelfand Triple} $\HFSOGTrRd$. The basic
reference here is \cite{cofelu08}.

The setting of this Banach Gelfand Triple has been found
very useful in the context of time-frequency and in particular
Gabor Analysis. One can say that for a good Gabor system,
say obtained by using a Gaussian window and lattice constants $a,b$ with
$ab < 1$ these Banach spaces correspond to the collections
of all those tempered distributions with (canonical) Gabor
coefficients in $\HFellGTr(\HFZtdst)$ respectively. But this
setting is not only helpful for the analysis of such kind
of questions, it can also be used for novel interpretations
of questions of classical Fourier Analysis, see \cite{fe19,fe23}.

Among others this setting allows to prove the so-called
kernel theorem, i.e. the description of bounded linear
operators from $\HFSORdN$ to $\HFSOPRdN$ (thus e.g. from
$\HFLpRdN$ to $\HFLqRdN$ for $1 \leq p < \infty$ and
$1 \leq q \leq \infty$) by the ``integral kernel''
$K(x,y) \in \HFSOPRtd$. The inner kernel theorem,
i.e., the careful analysis of those operators with
kernels $K \in \HFSORtd$ is carried out in \cite{feja22}.
Unfortunately this paper contains a slight gap. Not
every $\HFwstd$to-norm continuous mapping from
$\HFSOPRd$ to $\HFSORdN$ has a kernel $K \in \HFSORtd$. Instead,
 these operators can be identified with called the $\HFwstd$nuclear operators
from $\HFSOPRd$ to $\HFSORd$, but this correction will be
discussed in a separate note. It will be no surprise that
the classical kernel theorem identifying  $\HFLtRtd$
with the Hilbert-Schmidt operators appears as a restriction
of the general kernel theorem.

Let us conclude this short session with the hint
that the usual approach to the {\it regularization}
of tempered distributions, by applying a smoothing
operator (by convolution with a test function) and then
a pointwise multiplication (by another test function) providing
localization, can be applied in a similar fashion in the
setting of $\HFSOGTrRd$.

Recall that we have
$$
\HFScRd \cdot (\HFScRd \ast \HFScPRd) \subseteq \HFScRd  \HFqandq \HFScRd \ast (\HFScRd \cdot \HFScPRd) \subseteq \HFScRd,
$$ 
and correspondingly
$$ \HFSORd \cdot (\HFSORd \ast \HFSOPRd) \subseteq \HFSORd  \HFqandq
\HFSORd \ast (\HFSORd \cdot \HFSOPRd) \subseteq \HFSORd.
$$ 
Whereas the first set of inclusions requires a careful
treatment of derivative and a lot of analysis, the corresponding
statements involving $\HFSORd$ can be derived in essentially
in two lines,  using standard convolution and multiplication results
for Wiener amalgam spaces. Let us show how this can be done
for the left hand inclusion:
\begin{equation}
\HFSOsp \ast \HFSOPsp = \HFWFLili \ast \HFWFLinlin \subseteq \HFWFLilin
\end{equation}
because $ \HFFLisp \ast \HFFLinsp \subset \HFFLisp$ is just the
Fourier version of the obvious pointwise product rule
$ \Lisp \cdot \HFLinsp \subseteq \Lisp$.
Using the fact that $\HFFLisp$ is a pointwise algebra we can
now apply a  pointwise multiplication result by $\HFSOsp$, obtaining thus
\begin{equation}
\HFSOsp \cdot \HFWFLilin=\HFWFLili \cdot \HFWFLilin \subseteq \HFWFLili = \HFSOsp.
\end{equation}

\section{Conclusion}

Overall one can say that Wiener amalgams play an important role in
many areas of mathematical analysis. Among the function spaces described
in \cite{fe15} they are probably the overall most useful family of Banach
spaces of distributions. Like the principles of complex interpolation theory
the basic results (duality, convolution, Fourier transforms) are easy to
use, even if one has not studied the underlying technical (although somehow
elementary)  principles.
\bibliographystyle{abbrv}  

\end{document}